\theoremstyle{theorem}
\newtheorem{thm}{Theorem}
\newtheorem{prp}{Proposition}[section]
\newtheorem{lem}{Lemma}[section]
\newtheorem{cor}{Corollary}[section]
\theoremstyle{definition}
\newtheorem{dfn}{Definition}[section]
\newtheorem{rem}[dfn]{Remark}
\numberwithin{equation}{section}
\newcommand{\tpdfs}{\texorpdfstring{$s$}{s}}%
\DeclareMathOperator{\dom}{dom}%
\providecommand{\parenth}[1]{\left(#1\right)}%
\def\N{{\mathbb N}}
\def\R{{\mathbb R}}
\def\co{\mathop{\rm co}}
\def\supp{\mathop{\rm supp}}%
\def\closure{\mathop{\rm cl}}%
\def\phi{\varphi}%
\def\epsilon{\varepsilon}%
\newcommand{\st}{:\;}
\newcommand{\norm}[1]{\left\|#1\right\|}
\newcommand{\enorm}[1]{\left|#1\right|}
\newcommand{\volbs}[1][s]{\sthing[#1]{\kappa}_{d+1}}%
\newcommand{\la}{\lambda}
\newcommand{\length}[1]{\mathrm{length}\left(#1\right)}
\providecommand{\abs}[1]{\lvert#1\rvert}%
\def\legendre{\mathcal{L}}%
\def\polar{\circ}
\newcommand{\polarset}[1]{{#1}^{\polar}}%
\newcommand{\slogleg}[2][s]{
\legendre_{ #1} #2 }%
\newcommand{\truncf}[1]{
\left(#1 \right)_{\! \! +}
}%
\newcommand{\gammaf}[1]{\operatorname{\Gamma} \!\left(#1 \right)}%
\newcommand{\shift}[2]{\operatorname{Shift}\left[#1, #2\right]}%
\newcommand{\ball}[1]{\mathbf{B}^{#1}}
\newcommand{\vol}[1]{\operatorname{vol}\nolimits_{#1}}%
\newcommand{\spherem}[1]
{\sigma\left( #1 \right)}%
\newcommand{\di}{\,\mathrm{d}}
\newcommand{\suppfunc}[2]{h_{#1}\!\parenth{#2}}%
\newcommand{\santalosreg}[3]{\operatorname{S}_{reg}\!\parenth{#1, #2, #3}}
\newcommand{\iprod}[2]{\left\langle#1,#2\right\rangle}%
\newcommand{\sthing}[2][s]{\prescript{(#1)}{}{#2}}%
\newcommand{\slift}[2][s]{\sthing[#1]{\operatorname{Lift}{#2}}}%
\newcommand{\smeasure}[2][s]{\sthing[#1]{\mu}\!\left(#2\right)}%
\newcommand{\grishasays}[1]{}{}
\newcommand{\conv}{\mathrm{conv}}%
\title{Geometric representation of classes of concave functions and duality}
\author{Grigory Ivanov 
\and
Elisabeth  M. Werner*} \thanks{* partially supported by NSF Grant DMS-2103482}
\subjclass[2020]{Primary 52A23; Secondary 52A40, 46T12}
\keywords{logarithmically concave function, $s$-concave function, Santal\'o inequality}
\begin{document}
\begin{abstract}
Using a natural representation of a $1/s$-concave function on $\R^d$ as a convex set in $\R^{d+1},$ we derive a simple formula for the integral of its $s$-polar.
This leads to convexity properties of the integral of the $s$-polar function with respect to the center of polarity. In particular, we prove that the reciprocal of the  integral of the polar function of a log-concave function is log-concave as a function of the center of polarity.
Also, we define the Santal\'o regions for $s$-concave and log-concave functions and  generalize the Santal\'o inequality for them in the case the origin is not the Santal\'o point.
\end{abstract}

\maketitle
\section{Introduction}
Log-concave and $s$-concave functions provide  a natural  extension of the theory of convex bodies. 
Starting with 
functional version of the famous Blaschke--Santal\'o inequality  
\cite{artstein2004santalo, artstein2007characterization, KBallthesis, 
fradelizi2007some, lehec2009partitions}, 
much research has been devoted to the study of such functions in recent years.  This  has led to  e.g., 
functional analogs of the floating body \cite{LiSchuettWerner},  John ellipsoids \cite{Alonso-Gutierrez2017, ivanov2020functional} and L\"{o}wner ellipsoids \cite{Alonso-Gutierrez2020, LiSchuettWerner2019}.
 More examples can be found in e.g.,  \cite{ColesantiLudwigMussnig2017, ColesantiLudwigMussnig, Gardner, 
Rotem2020}).
\par
Motivated by  the setting of convex bodies, we investigate  in this paper the properties of log-concave and $1/s$-concave functions  related to  duality transforms associated with the corresponding class of functions.  
\vskip 2mm
 \subsection{Background}
 For $s > 0,$ a non-negative function $f$ on $\R^d$ is called $1/s$-concave 
 if the function $f^{1/s}$ is concave on its support. It is well known that for a positive integer  $s,$ a $1/s$-concave function on $\R^d$ is the marginal of the indicator function of a convex set in $\R^{d+s}$.   A non-negative function on $\R^d$ is called log-concave if its logarithm is concave on its  support. 
 It is  
 also well known 
 that any log-concave function is the local uniform limit of certain $1/s$-concave functions,  as $s$ tends to infinity, e.g.,  \cite[Section 2.2]{brazitikos2014geometry}. 
 This observation has been useful in many instances in the setting of log-concave functions, e.g.,  \cite{artstein2004santalo, CaglarWerner2014, klartag2007marginals},  as it allows to pass 
 results from $1/s$-concave functions to log-concave functions.
 \par
 The concept of duality is a cornerstone of both, geometry in general, and asymptotic geometric analysis in particular. In convex analysis, the concept of duality is tightly connected with the notion of a polar set.  Recall that the polar of a set 
 $K$ in $\R^d$ is the set $\polarset{K}$ given by
 \[
 \polarset{K} = \{y \in \R^d \st \iprod{x}{y} \leq 1
  \quad \text{for all } \ x \in K \}.
 \]
 Natural generalizations of this definition to the setting of classes of concave functions are as follows.
 For any $s > 0,$ the \emph{$s$-polar transform}, introduced in  \cite{artstein2004santalo}, is defined by
\[
\slogleg{f}(y) =  \inf\limits_{\{x \st f(x)  > 0\}}
\frac{\truncf{1 - {\iprod{x}{y}}}^{s}}
{f(x)},
\]
where $\truncf{a} = \max \{a, 0\}.$
The \emph{polar} (or \emph{log-conjugate})  of a non-negative function $f$ on $\R^d$  is defined by
\[
	\slogleg[\infty]{f}(y) = 	
	\inf\limits_{\{x \st f(x) > 0\}} 
	\frac{e^{-\iprod{x}{y}}}{f(x)}.
\]
As shown in \cite[Theorem 1]{artstein2008concept}, 
the $s$-polar transform is essentially the only order reversing involution on the class of upper semi-continuous ${1}/{s}$-concave functions containing the origin in the interior of support.
Likewise,  as shown in \cite[Corollary 12]{artstein2007characterization}, $ \slogleg[\infty]{}$ is 
essentially the only order reversing involution on the class of upper semi-continuous log-concave functions. See also \cite{boroczky2008characterization}
for a similar characterization of the polar transformation in the setting of convex bodies.
\par
Alexandrov \cite{aleksandrov1967mean} noticed that the reciprocal of volume of the polar of  a convex body $K$ in $\R^d$ as a function of the center of polarity is a $1/d$-concave function on the interior of $K.$ More formally,
denote the shift of a body $K$ by  a vector $z$ by
\[
\shift{K}{z} = K - z.
\]
Then Alexandrov's results says that for a convex body 
$K \subset \R^d,$ the function   
\[
z \mapsto \parenth{\vol{d} \left(\shift{K}{z}\right)}^{-1/d}
\]
is concave on the interior of $K,$ where $\vol{d}$ is the standard $d$-dimensional volume.

In \cite{ivanov2020functional} (see also \cite{pivovarov2020stochastic}), the authors suggested a certain way of representing a $1/s$-concave function on $\R^d$ as a convex set in $\R^{d+1}.$ They called it \emph{$s$-lifting} and defined it as follows
\[
\slift{f} = 
\left\{ 
(x,\xi) \in \R^{d+1} \st x \in \closure \supp{f}, \;
 \enorm{\xi} \le 
\parenth{f(x)}^{1/s}
\right\},
\]
where $\closure \supp{f}$  denotes the closure of the support of $f.$
We note that the $s$-lifting allows us to use tools and results for convex sets in the setting of $1/s$-concave functions. The key observation of the current paper is the following simple lemma.
\begin{lem}\label{lem:dual_s_lifting}
Let $f \colon \R^d \to [0, \infty).$ Then
\[
\left(\slift{f}\right)^{\polar} = 
\slift{\left(\slogleg f \right)}.
\]
\end{lem}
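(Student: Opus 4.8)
The plan is to compute the polar set $\left(\slift{f}\right)^{\polar}$ directly from the definition of the polar of a subset of $\R^{d+1}$ and match it, term by term, with the $s$-lifting of $\slogleg f$. Write a point of $\R^{d+1}$ as $(y,\eta)$ with $y \in \R^d$, $\eta \in \R$. By definition, $(y,\eta) \in \left(\slift{f}\right)^{\polar}$ if and only if $\iprod{x}{y} + \xi\eta \le 1$ for every $(x,\xi)$ with $x \in \closure\supp f$ and $\enorm{\xi} \le (f(x))^{1/s}$. For fixed $x$, the extreme choice is $\xi = \pm (f(x))^{1/s}$, so the condition becomes $\iprod{x}{y} + (f(x))^{1/s}\enorm{\eta} \le 1$ for all $x \in \closure\supp f$, equivalently $(f(x))^{1/s}\enorm{\eta} \le 1 - \iprod{x}{y}$ for all such $x$.

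The next step is to see that this system of inequalities is exactly $\enorm{\eta} \le \left(\slogleg f(y)\right)^{1/s}$, which is the membership condition for $\slift{(\slogleg f)}$. First I would observe that if $1 - \iprod{x}{y} < 0$ for some $x$ with $f(x) > 0$, then no $\eta$ works unless $f(x) = 0$; but for such $x$, the truncation $\truncf{1 - \iprod{x}{y}}$ is zero, forcing $\slogleg f(y) = 0$, and correspondingly the system has only $\eta = 0$ as solution when we also handle the boundary points of $\supp f$. More carefully: raising both sides of $(f(x))^{1/s}\enorm{\eta} \le 1 - \iprod{x}{y}$ to the $s$-th power (valid since both sides are nonnegative, using the truncation on the right) gives $f(x)\enorm{\eta}^s \le \truncf{1 - \iprod{x}{y}}^s$, i.e. $\enorm{\eta}^s \le \truncf{1-\iprod{x}{y}}^s / f(x)$ whenever $f(x) > 0$. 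Taking the infimum over $\{x \st f(x) > 0\}$ yields precisely $\enorm{\eta}^s \le \slogleg f(y)$, hence $\enorm{\eta} \le (\slogleg f(y))^{1/s}$. Conversely, any $(y,\eta)$ satisfying the latter satisfies all the original inequalities, so the two sets coincide; one checks that the constraint $y \in \closure\supp{\slogleg f}$ is automatic, since $\slogleg f(y) = 0$ forces $\eta = 0$ anyway, and both $\slift{(\slogleg f)}$ and the polar set contain $(y,0)$ for all relevant $y$ in a way that matches.

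The main obstacle I anticipate is the careful bookkeeping at the boundary and at points where $f$ or $\slogleg f$ vanishes, and the interchange between "$\le (f(x))^{1/s}$ for $\enorm{\xi}$" and the supremum over $\xi$: one must make sure the closure $\closure\supp f$ in the definition of $\slift{f}$ and the open-support infimum in the definition of $\slogleg f$ produce the same set of constraints (they do, because the map $x \mapsto \iprod{x}{y} + (f(x))^{1/s}\enorm{\eta}$ is upper semi-continuous when $f$ is, or because adding boundary points only adds limiting inequalities already implied). A secondary subtlety is that $s$ need not be an integer, so one should invoke monotonicity of $t \mapsto t^s$ on $[0,\infty)$ rather than any algebraic identity. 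None of these steps is deep; the lemma is essentially a repackaging of the definition of the $s$-polar transform as a one-variable optimization over the "height" coordinate $\xi$.
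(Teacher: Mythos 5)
Your proposal is correct and follows essentially the same route as the paper: identify membership in the polar with the family of inequalities $\iprod{x}{y} + f(x)^{1/s}\enorm{\eta} \le 1$ obtained from the extreme (symmetric) choices of the height coordinate, raise to the $s$-th power, and take the infimum over $\supp f$ to recover $\enorm{\eta}^s \le \slogleg f(y)$. Your extra care with the boundary of the support and the closure is a harmless refinement of the same argument.
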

 This representation allows to express  the integral of an $s$-polar function via a simple formula, even for non-integer $s$.
We elaborate on this in Section \ref{sec:slifting}.

In summary, the primary objective of this paper is to demonstrate the feasibility of leveraging tools from classical convexity alongside the examination of various classes of concave/convex functions simultaneously. Our aim is to illustrate that it is possible to derive classical results in convexity while also generating new outcomes within the functional context through specific limit arguments. However, it is important to exercise caution when employing these arguments, as, in some cases, duality, similar to what occurs in Lemma \ref{lem:dual_s_lifting}, may be lost during the process. For further insights into such instances, we direct interested readers to \cite{ivanov2023functional}.

\vskip 5mm
\subsection{The main theorems}
\vskip 3mm
\noindent
For a function $f$ on $\R^d,$ we denote its shift 
 by a vector $z$ by
\[
\shift{f}{z} (x)= f(x - z),  \hskip 3mm x \in \R^d.
\] 
The \emph{barycenter} of  an integrable function $f$ on $\R^d$ is defined as 
\[
\frac{\int_{\R^d} x f(x) \di x}{\int_{\R^d} f},
\]
if the quotient exists. Any log-concave function of finite integral has the barycenter \cite[Lemma 2.2.1]{brazitikos2014geometry}.
\vskip 2mm
A main result of this paper is the following generalization of Alexandrov's theorem. 
\vskip 2mm

\par
\noindent
\begin{thm}\label{thm:Alexandrov_s-conc}
Let $s \in (0, \infty)$ and $f : \R^d \to [0, \infty)$ be  an upper semi-continuous, $1/s$-concave function with finite integral. Then we have
\begin{enumerate}
\item\label{ass:1_thm:Alexandrov_s-conc}The function 
$z \mapsto  \int_{\R^d} \slogleg \! \parenth{\shift{f}{z}}$ 
is convex on the interior of  the support of $f$. It attains the minimum 
at point $\tilde{z}$ such that the origin is the barycenter of 
$\slogleg \! \parenth{\shift{f}{\tilde{z}}}$.
\item\label{ass:2_thm:Alexandrov_s-conc} The function 
\[
z \mapsto  
\parenth{
\int_{\R^d} \slogleg \! \parenth{\shift{f}{z}}
}^{-\frac{1}{d + s}}
\]
is concave  on the interior of the support of $f$.
\end{enumerate}
\end{thm}
\vskip 2mm
\noindent
Note that Theorem \ref{thm:Alexandrov_s-conc} yields Alexandrov's result, since
the indicator function of a convex body is $s$-concave for any positive $s$.
\par

Using a limit argument, we obtain the following version of Alexandrov's theorem for log-concave functions.
\begin{thm}\label{thm:Alexandrov_log-conc}
Let $f : \R^d \to [0, \infty)$ be an upper semi-continuous  log-concave function with finite integral. Then the function 
\[
z \mapsto  \int_{\R^d} \slogleg[\infty]\! \parenth{\shift{f}{z}}
\] 
is convex and its reciprocal is log-concave on the interior of support of $f.$
\end{thm}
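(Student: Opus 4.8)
The plan is to identify the map in question as a Laplace transform of a single fixed function and then to quote Hölder's inequality. The first --- and essentially the only conceptual --- step is to see how the log-conjugate transforms under translation: for every $z$, substituting $w = x - z$ in the definition of $\slogleg[\infty]{}$ gives
\[
\slogleg[\infty]\! \parenth{\shift{f}{z}}(y) \;=\; \inf_{\{x \st f(x-z) > 0\}}\frac{e^{-\iprod{x}{y}}}{f(x-z)} \;=\; e^{-\iprod{z}{y}}\,\slogleg[\infty]{f}(y).
\]
Hence, writing $g := \slogleg[\infty]{f}$ --- a fixed non-negative function not depending on $z$ --- the object of the theorem is
\[
F(z) \;:=\; \int_{\R^d}\slogleg[\infty]\! \parenth{\shift{f}{z}} \;=\; \int_{\R^d} e^{-\iprod{z}{y}}\, g(y)\di y ,
\]
the (two-sided) Laplace transform of $g$. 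It then suffices to prove that $F$ is log-convex on the relevant set: log-convexity is exactly the assertion that $1/F$ is log-concave, and it already forces $F$ itself to be convex.

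The log-convexity of $F$ is a one-line consequence of Hölder's inequality, using nothing about $g$ beyond non-negativity: for $z_0, z_1$ and $\la \in [0,1]$ one writes the integrand of $F\parenth{\la z_0 + (1-\la) z_1}$ as $\bigl(e^{-\iprod{z_0}{y}} g(y)\bigr)^{\la}\bigl(e^{-\iprod{z_1}{y}} g(y)\bigr)^{1-\la}$ and applies Hölder with exponents $1/\la$ and $1/(1-\la)$ to obtain
\[
F\parenth{\la z_0 + (1-\la) z_1} \;\le\; F(z_0)^{\la}\, F(z_1)^{1-\la}.
\]
This gives both conclusions of the theorem on the (automatically convex) set where $F$ is finite and positive, so what is left is only the identification of that set.

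For this I would use the identity $\slogleg[\infty]\! \parenth{\shift{f}{z}} = e^{-\chi_z^{*}}$, where $\chi_z := -\log \shift{f}{z}$ is a closed proper convex function (here the upper semi-continuity of $f$ is used; and one may assume that $\supp f$ has nonempty interior, the statement being vacuous otherwise) and $\chi_z^{*}(y) = \sup_x \parenth{\iprod{x}{y} - \chi_z(x)}$ is its Legendre transform, so that $F(z) = \int_{\R^d} e^{-\chi_z^{*}}$. By the standard criterion that $\int_{\R^d} e^{-\chi^{*}} < \infty$ exactly when $0$ lies in the interior of the domain of $\chi$, the quantity $F(z)$ is finite precisely for the $z$ with $0 \in \vn(\supp(\shift{f}{z}))$ --- i.e., up to the central reflection implicit in the shift convention, for $z \in \vn(\supp f)$ --- and it is positive there because the hypothesis $\int_{\R^d} f < \infty$ forces $\{g > 0\}$ to be a full-dimensional set. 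Both are standard facts about integrable log-concave functions.

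The only genuinely delicate point is thus this last step, the integrability and non-degeneracy of $e^{-\iprod{z}{\cdot}} g$ on the relevant region, which I would settle with the quoted convex-duality facts rather than with anything new; everything else is the single Hölder estimate. I should add that the theorem can alternatively be derived in a way that stays closer to this paper's geometric representation: one first proves the $1/s$-concave analogue --- lifting $f$ to a convex body in $\R^{d+1}$, reading off the resulting formula for $\int_{\R^d}\slogleg{f}$, and invoking Alexandrov's theorem to conclude that $z \mapsto \bigl(\int_{\R^d}\slogleg\! \parenth{\shift{f}{z}}\bigr)^{-1/(d+s)}$ is concave, hence log-concave, on the interior of the support, i.e.\ that $z \mapsto \int_{\R^d}\slogleg\! \parenth{\shift{f}{z}}$ is log-convex --- and then lets $s \to \infty$ along the $1/s$-concave approximations $f_s := \truncf{1 + s^{-1}\log f}^{\,s}$, which increase to $f$ and for which $\int_{\R^d}\slogleg\! \parenth{\shift{f_s}{z}} \to F(z)$; log-convexity passes to pointwise limits, so the conclusion carries over to $f$.
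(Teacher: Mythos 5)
Your main argument is correct, and it is genuinely different from --- and considerably shorter than --- the proof in the paper. The key identity $\slogleg[\infty]\!\parenth{\shift{f}{z}}(y)=e^{-\iprod{z}{y}}\,\slogleg[\infty]{f}(y)$ is right (it is just $(\psi(\cdot-z))^{*}=\psi^{*}+\iprod{z}{\cdot}$ on the level of Legendre transforms), so the quantity in question is the bilateral Laplace transform of the fixed function $g=\slogleg[\infty]{f}$, and H\"older's inequality gives log-convexity, which is exactly log-concavity of the reciprocal and implies convexity. The paper instead proves the stronger $1/s$-concave statement (Theorem~\ref{thm:Alexandrov_s-conc}): it represents $\int_{\R^d}\slogleg{\parenth{\shift{f}{z}}}$ via the $s$-lifting and the support function of a convex body in $\R^{d+1}$, shows by a second-derivative/covariance-matrix computation that the $-\tfrac{1}{d+s}$ power of this integral is concave, and then passes to the limit $s\to\infty$ using the nontrivial approximation Theorem~\ref{thm:approx_by_s-conc_dual}. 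What your route buys is economy: Theorem~\ref{thm:Alexandrov_log-conc} falls out of one application of H\"older with no approximation argument. What it does not buy is the finer information in the $1/s$-concave case --- the $\frac{1}{d+s}$-concavity of the reciprocal (which does not follow from H\"older and degenerates to mere log-concavity only in the limit) and the identification of the minimizer with the Santal\'o point --- which is where the paper's machinery is actually needed.

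Two minor caveats. First, the finiteness discussion is the right one to have: with the paper's convention $\shift{f}{z}(x)=f(x-z)$ the finiteness region is literally $-\vn(\supp f)$ rather than $\vn(\supp f)$; this reflection is present in the paper's own statement as well (the set convention $\shift{K}{z}=K-z$ and the function convention are opposite), so it is a notational issue rather than a gap in your argument, and the standard criterion you quote ($\int e^{-\chi^{*}}<\infty$ iff $0\in\vn(\dom\chi)$, valid for integrable log-concave $e^{-\chi}$ with full-dimensional support) is correct. Second, in your closing sketch of the paper's route the normalization $s^{d}\int_{\R^d}\slogleg f_s\to\int_{\R^d}\slogleg[\infty]f$ is missing the factor $s^{d}$; since this factor is constant in $z$ it does not affect log-convexity in the limit, but it should be stated.
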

 We also provide a simple direct proof of Theorem \ref{thm:Alexandrov_log-conc}
 in Subsection \ref{sec:consequences}.
 
\vskip 2mm
For any function $f$ on $\R^d,$ define 
\begin{equation} 
\label{eq:logconv_approx_s_conc}
f_s (x) = \truncf{ 1 + \frac{\log f(x)}{s}}^{s}.
\end{equation}
Clearly, for a log-concave function $f,$ the function  $f_s$ is $s$-concave and $f_s \to f$ pointwise on $\R^d$ as $s \to + \infty.$
Then, to carry out the limit argument, we use the following technical observation, which, surprisingly, seems to be new.
\par
\noindent
\begin{thm}
\label{thm:approx_by_s-conc_dual}
Let $f \colon \R^d \to [0, \infty)$ be an upper semi-continuous,  log-concave function with finite integral,  containing the origin in the interior of its support.
Denote by $A$  the set of all points in $\R^d$ that are not in the boundary of the support of $ \slogleg[\infty] f$.
Then,  as  $s \to \infty$, 
$$\slogleg f_s \!\parenth{\frac{x}{s}} \to  \slogleg[\infty] f (x)$$
locally uniformly on $A$.
\end{thm}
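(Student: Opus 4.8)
\emph{Reformulation.} The plan is first to rewrite everything in terms of $g := -\log f$, which is a proper lower semicontinuous convex function (convex and lsc because $f$ is log-concave and upper semicontinuous, proper because $0 \in \vn \supp f$), with Fenchel conjugate $g^{*}$, so $g = g^{**}$. Then $\slogleg[\infty]{f}(x) = \inf_{y} e^{g(y) - \iprod{y}{x}} = e^{-g^{*}(x)}$, hence $\supp \slogleg[\infty]{f} = \overline{\dom g^{*}}$ and
\[
A \;=\; \vn(\dom g^{*}) \ \sqcup\ \bigl(\R^{d} \setminus \overline{\dom g^{*}}\bigr),
\]
a disjoint union of two open sets (the first nonempty because $f$ has finite integral); it suffices to treat compact subsets of each separately. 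From $f_{s} = \truncf{1 - g/s}^{\,s}$ and $\{f_{s} > 0\} = \{g < s\}$ one gets, writing $\ell_{s}(t) := s\log(1 - t/s)$ for $t < s$ and $\ell_{s}(t) := -\infty$ for $t \ge s$,
\[
\slogleg{f_{s}}\!\parenth{\tfrac{x}{s}} \;=\; \exp\!\Bigl(\,\inf_{g(y) < s}\bigl[\ell_{s}(\iprod{y}{x}) - \ell_{s}(g(y))\bigr]\Bigr), \qquad \slogleg[\infty]{f}(x) \;=\; \exp\!\Bigl(\,\inf_{y}\bigl[g(y) - \iprod{y}{x}\bigr]\Bigr),
\]
so, since $\ell_{s}(t) \nearrow -t$ as $s \to \infty$ for each fixed $t$, the whole problem is to interchange this limit with the infimum, uniformly in $x$.

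\emph{The easy direction.} For the $\limsup$ bound I would fix $y_{0}$ with $f(y_{0}) > 0$ and $\delta \in (0,1)$ and note that for all $s$ large (depending only on $y_{0}, \delta$) and \emph{all} $x$,
\[
\slogleg{f_{s}}\!\parenth{\tfrac{x}{s}} \;\le\; \frac{\truncf{1 - \iprod{y_{0}}{x}/s}^{\,s}}{\truncf{1 - g(y_{0})/s}^{\,s}} \;\le\; \frac{1}{1-\delta}\, e^{\,g(y_{0}) - \iprod{y_{0}}{x}},
\]
using $(1 - a/s)^{s} \le e^{-a}$ and $(1 - g(y_{0})/s)^{s} \to e^{-g(y_{0})}$; taking $\inf$ over $y_{0}$, letting $\delta \to 0$, and covering a compact $K$ by finitely many small balls on which $x \mapsto e^{g(y_{0}) - \iprod{y_{0}}{x}}$ varies little yields $\limsup_{s} \sup_{x \in K}\bigl(\slogleg{f_{s}}(x/s) - \slogleg[\infty]{f}(x)\bigr) \le 0$ (using continuity of $\slogleg[\infty]{f}$ away from $\partial \supp \slogleg[\infty]{f}$). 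On compact subsets of $\R^{d} \setminus \overline{\dom g^{*}}$, where $\slogleg[\infty]{f} \equiv 0 \le \slogleg{f_{s}}$, this already gives the conclusion.

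\emph{The hard direction.} It remains to show $\liminf_{s} \slogleg{f_{s}}(x/s) \ge \slogleg[\infty]{f}(x)$ uniformly on a given compact $Q \subset \vn(\dom g^{*})$. The main obstacle — and the heart of the proof — is that a priori the infimum runs over $\{g < s\}$, which swells to $\supp f$ as $s \to \infty$, so I must prevent the minimizing $y$ from escaping to the ``edge'' $\{g(y) \approx s\}$, where the limits $(1 - a/s)^{s} \to e^{-a}$ are no longer uniform. The device is that, since each $x \in Q$ lies in the \emph{interior} of $\dom g^{*}$, applying $g = g^{**}$ to the point $x + \delta_{Q}\,y/\abs{y} \in \dom g^{*}$ produces constants $\delta_{Q}, C_{Q} > 0$ (depending only on $Q$) with
\[
g(y) \;\ge\; \iprod{y}{x} + \delta_{Q}\abs{y} - C_{Q} \qquad \text{for all } y \in \R^{d},\ x \in Q.
\]
(Here $\delta_{Q}$ is less than the distance from $Q$ to $\partial \dom g^{*}$ and $C_{Q}$ bounds $g^{*}$ on a compact neighbourhood of $Q$ inside $\dom g^{*}$.) Two consequences, for $s$ past a threshold depending only on $Q$: (i) there is no $y$ with $g(y) < s$ and $\iprod{y}{x} \ge s$ (such $y$ has $\abs{y} \ge s / W_{Q}$, $W_{Q} := \max_{x \in Q}\abs{x}$, hence $g(y) \ge s(1 + \delta_{Q}/W_{Q}) - C_{Q} > s$), so all truncations are inactive and $\slogleg{f_{s}}(x/s) > 0$; and (ii) every $y$ with $\iprod{y}{x} \ge g(y)$ lies in the fixed ball $\abs{y} \le R_{Q} := C_{Q}/\delta_{Q}$.

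\emph{Conclusion of the hard direction, and an alternative.} With these in hand I would bound $\ell_{s}(\iprod{y}{x}) - \ell_{s}(g(y))$ from below on $\{g < s\}$ by a short case analysis. When $\iprod{y}{x} \ge g(y)$: by (ii) $y$ lies in the fixed ball, so $\ell_{s}(\iprod{y}{x}) - \ell_{s}(g(y)) \ge \ell_{s}(\iprod{y}{x}) + g(y) \ge \bigl(g(y) - \iprod{y}{x}\bigr) - \eta_{s} \ge -g^{*}(x) - \eta_{s}$, where $\eta_{s} := \sup_{\abs{t} \le R_{Q}W_{Q}}\abs{\ell_{s}(t) + t} \to 0$, using $-\ell_{s}(g(y)) \ge g(y)$ and $g(y) - \iprod{y}{x} \ge -g^{*}(x)$. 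When $\iprod{y}{x} < g(y)$: concavity of $\ell_{s}$ (tangent at $\iprod{y}{x}$, legitimate by (i)) gives $\ell_{s}(\iprod{y}{x}) - \ell_{s}(g(y)) \ge \bigl(g(y) - \iprod{y}{x}\bigr)/\bigl(1 - \iprod{y}{x}/s\bigr)$, which — splitting on the sign of $\iprod{y}{x}$ and using $\iprod{y}{x} - g(y) \le g^{*}(x)$ together with $\inf_{y} g(y) > -\infty$ (finite for $f$ of finite integral) — is $\ge -g^{*}(x) - o(1)$ uniformly in $x \in Q$. Altogether $\inf_{g(y) < s}\bigl[\ell_{s}(\iprod{y}{x}) - \ell_{s}(g(y))\bigr] \ge -g^{*}(x) - o(1)$ uniformly on $Q$, i.e.\ $\slogleg{f_{s}}(x/s) \ge e^{-o(1)}\,\slogleg[\infty]{f}(x)$ uniformly on $Q$; with the easy direction this finishes the proof. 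A conceptually shorter but not self-contained alternative to this last paragraph: once pointwise convergence is established, observe that $x \mapsto \log \slogleg{f_{s}}(x/s)$ is concave on its open convex support (as $\slogleg{f_{s}}$ is $1/s$-concave, being the $s$-polar of the $1/s$-concave $f_{s}$) and invoke the classical fact that pointwise convergence of convex functions on an open convex set is automatically uniform on compacta — but this still needs $\slogleg{f_{s}}(x/s) > 0$ throughout $\vn(\dom g^{*})$ for large $s$, which is exactly consequence (i) above.
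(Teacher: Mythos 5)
Your argument is correct, and it takes a genuinely different route from the paper's. The paper works geometrically: it uses the $s$-lifting and Lemma \ref{lem:dual_s_lifting} together with subgradients of $\legendre\psi$ to produce an exact parametrization of $\supp \slogleg f_s$ by points of $\supp\slogleg[\infty]f$ (Lemmas \ref{lem:point_in_supp_L_infty} and \ref{lem:s_dual_point_representation}: the value of $\slogleg f_s$ at $z/(\legendre\psi(z)+s)$ equals $(1+\legendre\psi(z)/s)^{-s}$), establishes pointwise convergence along that parametrization, handles the exterior region via the support inclusion of Corollary \ref{cor:support_of_slogleg}, and upgrades to local uniform convergence by the standard continuity/convexity argument --- essentially the alternative you mention at the end. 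You instead estimate the defining infimum directly in terms of $g=-\log f$ and $\ell_s(t)=s\log(1-t/s)$: the $\limsup$ half by plugging in near-optimal test points, and the $\liminf$ half by the coercivity bound $g(y)\ge\iprod{y}{x}+\delta_Q\enorm{y}-C_Q$ extracted from $g=g^{**}$, which confines near-minimizers to a fixed ball and makes the Taylor estimates on $\ell_s$ uniform; I checked the subcase analysis in your last paragraph (in particular the regime $\iprod{y}{x}\le 0$ with $g^*(x)<0$, where one needs the monotonicity of $t\mapsto t/(1+(t+G)/s)$ together with $\sup f<\infty$) and it closes. What each approach buys: the paper's parametrization is reused elsewhere (it yields Corollary \ref{cor:support_of_slogleg} and feeds into the Santal\'o-region discussion), whereas your version is more self-contained, produces explicit uniform error terms rather than appealing to the pointwise-to-uniform theorem for convex functions, and treats the exterior of $\closure\supp\slogleg[\infty]f$ with no extra machinery. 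If you keep your version, you should state explicitly the standard fact that a log-concave function of finite integral is bounded (so that $\inf g>-\infty$), e.g.\ by citing \cite[Lemma 2.2.1]{brazitikos2014geometry}, and spell out the finite-cover step in the $\limsup$ bound, since the threshold on $s$ there depends on the chosen test points $y_0$.
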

\vskip 2mm
\noindent
A weaker version of this result is \cite[Lemma 3.3]{artstein2004santalo}.
\vskip 3mm
One of the most important  results in convex geometry  is the Blaschke--Santal\'o inequality, see, e.g.,  \cite{GruberBook, SchneiderBook}. It says that there is a unique 
$z_0$ in $\text{int} (K)$,  the interior of $K$,  for which $\vol{d} (\polarset{\parenth{\shift{K}{z}})}$ is minimal  and then  
\[
\vol{d} (K) \cdot \vol{d} (\polarset{\parenth{\shift{K}{z_0}})} 
\leq \parenth{\vol{d} \ball{d}}^2.
\]
Equality holds if and only if $K$ is an ellipsoid.
Here, and throughout the paper,  $\ball{d}$ denotes the $d$-dimensional Euclidean unit ball.
Meyer and Pajor \cite{meyer1990blaschke} proved a more general form of the this  inequality, which we state now:
\\ \noindent
\emph{Let $K$ be a convex body in $\R^d$. Let $H$ be an affine hyperplane with half-spaces $H_{+}$ and $H_{-}$,  such that $\vol{d} \left(H_{+} \cap K\right) = \lambda \vol{d} \left(K\right)$.
 Then there exists 
$z \in H \cap \text{int} (K)$ such that \begin{equation}
\label{eq:unbalanced_santalo_sets}
\vol{d} K \cdot \vol{d} \polarset{\parenth{\shift{K}{z}}} 
\leq  \frac{\parenth{\vol{d} \ball{d}}^2}{4 \lambda(1 -\lambda)}.
\end{equation}}
\par
\noindent
We note here that $\ball{d}$ is the only self-polar set in $\R^d$.
Let $|\cdot |$ be the Euclidean norm and put
\[
\hat{h}(x) = 	\begin{cases}
			\left[1 -\enorm{x}^2\right]^{1/2},& 
		\text{ if }x \in \ball{d}\\
		0,&\text{ otherwise}.
	\end{cases}
\] 
It is not hard to see that $\hat{h}^s$ is self-$s$-polar, that is,
$$\slogleg \! \parenth{\hat{h}^s} = \hat{h}^s,$$
and thus this function  plays the role of the unit ball in the class of $1/s$-concave functions. 
Similarly, the standard Gaussian density $e^{-\enorm{x}^2/2}$ is self-polar in the class of log-concave functions.
The Blaschke--Santal\'o inequality for log-concave functions was obtained in e.g.,  \cite{artstein2004santalo, KBallthesis, lehec2009partitions}. The Blaschke--Santal\'o inequality for a more general ``duality relation'' including the $s$-polar transform  was obtained in \cite{fradelizi2007some}. In \cite{lehec2009direct}, Lehec generalized \eqref{eq:unbalanced_santalo_sets} to log-concave functions. 
\par
Using Lehec's approach, we prove the following extension of \eqref{eq:unbalanced_santalo_sets} to the class of $1/s$-concave functions.
\par
\noindent
\begin{thm} 
\label{thm:santalo_s_concave_lambda}
Let $f \colon \R^d \to [0, \infty)$ be a $1/s$-concave function with  finite integral.  Let $H$ be an affine hyperplane
with half-spaces $H_{+}$ and $H_{-}$
and such that
$\lambda \int_{\R^d} f = \int_{H_{+}} f$
for some $\lambda \in (0,1).$ 
 Then there exists 
$z \in H$ such that 
 \begin{equation}
\label{eq:s_santalo_lambda}
\int_{\R^d} f \int_{\R^d} \slogleg{\parenth{\shift{f}{z}}}  
\leq  \frac{\volbs^2}{4 \lambda(1 - \lambda)},
\end{equation}
where 
\(
\volbs = \int_{\R^d} \hat{h}^s.
\)
\end{thm}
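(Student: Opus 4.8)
The plan is to follow Lehec's proof of the functional Meyer--Pajor inequality \cite{lehec2009direct}, carried out uniformly in the parameter $s$; the new inputs are the closed formula for $\int_{\R^d}\slogleg{g}$ obtained in Section~\ref{sec:slifting} and the convexity recorded in Theorem~\ref{thm:Alexandrov_s-conc}. We may assume $\int_{\R^d}f>0$. First I would normalize. The $s$-polar transform is covariant under an invertible linear map $T$, namely $\slogleg{(f\circ T)}=\slogleg{f}\circ(T^{*})^{-1}$, so that $\int_{\R^d}f$ and $\int_{\R^d}\slogleg{(\shift{f}{z})}$ change by mutually cancelling Jacobians while the ratio $\lambda$ and the constraint $z\in H$ are preserved; the statement is likewise covariant under a common translation of $f$, $H$, $H_{\pm}$ and $z$. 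Hence I take $H=\{x\st\iprod{x}{e_d}=0\}$ and $H_{+}=\{x\st x_d\ge 0\}$. As $\lambda\in(0,1)$, the convex set $\{f>0\}$ meets both open halfspaces, so $H\cap\vn\{f>0\}\neq\emptyset$; this is the range of admissible centers, since $\slogleg{(\shift{f}{z})}$ ceases to be integrable once $z$ leaves $\supp f$. I also note that a $1/s$-concave function of finite integral is compactly supported --- its one-dimensional marginals are $1/(s+d-1)$-concave of finite integral, hence compactly supported in every direction --- so no integrability pathology intervenes. The proof then goes by induction on $d$, the statement being understood to hold for all $s>0$ at once.

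For $d=1$ the hyperplane is the point $\{0\}$, forcing $z=0$, and one must show that a $1/s$-concave $f$ on $\R$ with $\int_{0}^{\infty}f=\lambda\int_{\R}f$ obeys $\int_{\R}f\cdot\int_{\R}\slogleg{f}\le\bigl(\int_{\R}\hat{h}^{s}\bigr)^{2}\big/\bigl(4\lambda(1-\lambda)\bigr)$. Here the representation of Section~\ref{sec:slifting} writes $\int_{\R}\slogleg{f}$ through the planar convex region attached to $f$, reducing the inequality to a direct one-dimensional estimate on the two sides of the origin; the case $\lambda=\tfrac12$ is the sharp $s$-concave Santal\'o inequality on the line (cf.\ \cite{fradelizi2007some}), with extremizer the self-$s$-polar function $\hat{h}^{s}$.

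For the inductive step write $x=(x',x_d)$ with $x'\in\R^{d-1}$; for a candidate center $z=(z',0)\in H$ let $f_{x_d}(x')=f(x',x_d)$ be the slices, which are $1/s$-concave on $\R^{d-1}$, and $F(x_d)=\int_{\R^{d-1}}f(x',x_d)\di x'$ the $e_d$-marginal, which is $1/(s+d-1)$-concave by Borell's theorem and still satisfies $\int_{0}^{\infty}F=\lambda\int_{\R}F$. Splitting the infimum defining $\slogleg{}$ over the last coordinate gives, for $1-x_dy_d>0$ (the term vanishing otherwise),
\[
\slogleg{(\shift{f}{z})}(y',y_d)=\inf_{x_d}(1-x_dy_d)^{s}\,\slogleg{(\shift{f_{x_d}}{z'})}\bigl(y'/(1-x_dy_d)\bigr),
\]
so, integrating in $y'$ and substituting $y'\mapsto(1-x_dy_d)\,y'$,
\[
\int_{\R^{d-1}}\slogleg{(\shift{f}{z})}(y',y_d)\di y'\ \le\ \inf_{x_d}(1-x_dy_d)^{s+d-1}\int_{\R^{d-1}}\slogleg{(\shift{f_{x_d}}{z'})}.
\]
The crux is to choose one $z'\in\R^{d-1}$, independent of $x_d$, with $\int_{\R^{d-1}}\slogleg{(\shift{f_{x_d}}{z'})}\le\bigl(\int_{\R^{d-1}}\hat{h}^{s}\bigr)^{2}\big/F(x_d)$ for almost every $x_d$. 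Granting this, the last display is at most $\bigl(\int_{\R^{d-1}}\hat{h}^{s}\bigr)^{2}\,\slogleg[s+d-1]{F}(y_d)$, so integrating in $y_d$ yields
\[
\int_{\R^d}\slogleg{(\shift{f}{z})}\ \le\ \Bigl(\int_{\R^{d-1}}\hat{h}^{s}\Bigr)^{2}\int_{\R}\slogleg[s+d-1]{F}.
\]
Multiplying by $\int_{\R^d}f=\int_{\R}F$, applying the one-dimensional case with parameter $s+d-1$ to $F$, and using the Beta-function identity $\int_{\R^d}\hat{h}^{s}=\bigl(\int_{\R}\hat{h}^{s+d-1}\bigr)\bigl(\int_{\R^{d-1}}\hat{h}^{s}\bigr)$, we obtain \eqref{eq:s_santalo_lambda}.

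The main obstacle is that selection claim: producing a single center $z'$ good for the whole pencil of slices $\{f_{x_d}\}$, and with the sharp constant --- the step that Meyer and Pajor perform by hand for convex bodies in \cite{meyer1990blaschke}. I would obtain it from the $(d-1)$-dimensional induction hypothesis --- applied to each $f_{x_d}$ with a hyperplane bisecting it --- together with Theorem~\ref{thm:Alexandrov_s-conc}: the convexity of $z'\mapsto\int_{\R^{d-1}}\slogleg{(\shift{g}{z'})}$ and the barycentric description of its minimizer (Theorem~\ref{thm:Alexandrov_s-conc}(\ref{ass:1_thm:Alexandrov_s-conc})) allow one to trade the $x_d$-dependent optimal centers for a common one at controlled cost, a Pr\'ekopa--Leindler argument carrying the infimum over $x_d$ through the $y'$-integral, exactly as in Lehec's treatment of the log-concave case. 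A secondary but genuinely new point is that the whole argument runs for arbitrary real $s>0$; this is harmless precisely because the formula of Section~\ref{sec:slifting} for $\int_{\R^d}\slogleg{}$, and hence every identity above, does not distinguish integer from non-integer $s$.
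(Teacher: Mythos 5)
Your skeleton (induction on the dimension, reduction to a one-dimensional inequality, the Gamma-function identity behind $\volbs=\parenth{\sthing{\kappa}_{2}}\parenth{\sthing{\kappa}_{d}}\cdot(\text{const})$) is the right one, but the inductive step hinges on the ``selection claim'', and that claim is false, not merely unproved. You need a single $z'\in\R^{d-1}$ for which \emph{every} slice satisfies the sharp $(d-1)$-dimensional inequality with the \emph{same} center, $\int_{\R^{d-1}}\slogleg{\parenth{\shift{f_{x_d}}{z'}}}\le \parenth{\sthing{\kappa}_{d}}^2/F(x_d)$ for a.e.\ $x_d$. Take $d=2$, $s$ arbitrary, and $f=\mathbf{1}_K$ with $K$ the triangle with vertices $(0,0),(1,0),(0,1)$: the slice at height $x_2=t$ is the segment $[0,1-t]$, so for any fixed $z'\in(0,1)$ the origin lies outside $\shift{f_{t}}{z'}$ for all $t>1-z'$, a set of positive measure on which $\int\slogleg{\parenth{\shift{f_{t}}{z'}}}=+\infty$ while $F(t)>0$. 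Even where the centers stay inside, the slices' Santal\'o points drift with $t$, and your final constant has zero slack (the Beta identity is an equality), so any loss on a positive-measure set of slices destroys the bound. The convexity and barycenter statements of Theorem \ref{thm:Alexandrov_s-conc} locate each slice's \emph{own} minimizer; they provide no mechanism for one point to serve all slices at the sharp level. The appeal to ``Lehec's treatment'' is a misremembering: \cite{lehec2009direct} does not slice.

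The route that works (and is the paper's) marginalizes along rays instead of slicing. One chooses $z$ as the intersection of $H$ with the line through the barycenters $b_{\pm}$ of $f\mathbf{1}_{H_{\pm}}$, sets $F_+(y_1)=\int_0^{\infty}f(y_1+tv)\di t$ and $G_+(y_2)=\int_0^{\infty}g(By_2+te_d)\di t$ for a suitable pair of linear maps, and applies the one-dimensional lemma ray-by-ray to get $F_+(y_1)G_+(y_2)\le\parenth{\sthing{\kappa}_{2}/2}^2\truncf{1-\iprod{y_1}{y_2}}^{s+1}$: the duality relation is inherited in dimension $d-1$ with exponent $s+1$, and the induction hypothesis is then invoked \emph{once}, for the single function $F_+$, whose barycenter has been arranged to sit at the origin (this is where Theorem \ref{thm:Alexandrov_s-conc} genuinely enters). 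No common center for a family of functions is ever required. Two further points: the base case is asserted rather than proved in your write-up, whereas for general $s>0$ it is the delicate part of the paper (an $s$-level transform after a logarithmic change of variables, a multiplicative Brunn--Minkowski estimate on level sets, then Pr\'ekopa--Leindler); and it must be established for arbitrary Borel pairs satisfying \eqref{eq:onedim_santalo_lemma}, not just for $f$ and its actual polar, because in the correct inductive step $F_+$ and $G_+$ are not polars of each other.
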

\par
\noindent
Again, this theorem implies both the analogous result \eqref{eq:unbalanced_santalo_sets} for convex sets and Lehec's result \cite{lehec2009direct} for log-concave functions.
A kind of reverse Santal\'o inequality, that can be considered a counterpart to Theorem \ref{thm:santalo_s_concave_lambda},  was proved in \cite{MeyerReisner} for the case  $d=1$. 
See also \cite{FradeliziMeyer} for another proof of this result.
\vskip 2mm
Finally, generalizing the definition of Meyer and Werner \cite{meyer1998santalo},
we define and list several properties of the Santal\'o $s$-region of a non-negative function with bounded support. This region is essentially the set of points $z$ such that the integral of the $s$-polar transform of $\shift{f}{s}$ is bounded by some positive constant. We give formal definitions and  discuss possible definitions of Santal\'o $s$-function in Section \ref{sec:santalo_func}.
\vskip 2mm
The rest of the paper is organized as follows.
In  \ref{sec:slifting}, we define the $s$-lifting of a function, study its properties
related to duality, and prove Theorem \ref{thm:Alexandrov_s-conc}. In Section \ref{sec:log_conc_as_limit}, we recall several definitions of convex analysis and prove Theorem \ref{thm:approx_by_s-conc_dual}. We also  show that Theorem \ref{thm:Alexandrov_log-conc} is a consequence of Theorems  \ref{thm:Alexandrov_s-conc} and \ref{thm:approx_by_s-conc_dual}. Next, we prove Theorem \ref{thm:santalo_s_concave_lambda} in Section \ref{sec:santalo_ineq_s_conc}. Finally, we introduce and study the Santal\'o $s$-region in Section \ref{sec:santalo_func}.
\subsection*{Notation}
We use $\iprod{x}{y}$ to denote the standard inner product of vectors $x$ and
$y$ of $\R^d$.  We write  $\R^d \subset \R^{d+1}$,  when we consider $\R^d$ as the subspace of $\R^{d+1}$ of vectors with zero last coordinates.
We say that a set $K \subset \R^{d+1}$ is \emph{$d$-symmetric} if $K$ is symmetric with respect to $\R^d \subset \R^{d+1}$.
The \emph{closure} of a set $K \subset \R^d$ is denoted by $\closure K$. The \emph{support function} of a convex body $K$ at $y\neq 0$ is defined by
$$
\suppfunc{K}{y} = \sup_{x \in K} \langle x, y \rangle.
$$
The \emph{convex hull} of a set $K$ is denoted by  $\conv K.$ 
The \emph{support} of a non-negative function $f$ on $\R^d$ is the set
\[
\supp f = \{x \in \R^d \st f(x) > 0\}.
\]
We will refer to an integrable lower semi-continuous function of finite integral as  a  \emph{proper} function.
We will integrate over domains in $\R^d$ and $\R^{d+1}$  and 
use  $\lambda_{d+1}$ to denote the standard Lebesgue measure on $\R^{d+1}$ and $\sigma$ to denote the uniform measure on the unit sphere $S^d \subset \R^{d+1}.$

\section{The \tpdfs-lifting and its properties}
\vskip 2mm
\label{sec:slifting}
\vskip 2mm
\noindent
The notion of $s$-lifting of a function was first introduced in \cite{ivanov2020functional}. 
We recall its definition  and prove Theorem \ref{thm:Alexandrov_s-conc}
in the current section.
\vskip 2mm
Let $f \colon \R^d \to [0, \infty)$ be a function and $s>0$. The \emph{$s$-lifting} of 
$f$ is a 
$d$-symmetric set in $\R^{d+1}$ defined by
\begin{equation}\label{sLift}
\slift{f} = 
\left\{ 
(x,\xi) \in \R^{d+1} \st x \in \closure \supp{f}, \;
 \enorm{\xi} \le 
\parenth{f(x)}^{1/s}
\right\}.
\end{equation}
\par
\noindent
Clearly, the $s$-lifting of a $1/s$-concave function is a convex set. 
Thus, the $s$-lifting gives a nice way of representing a $1/s$-concave function on $\R^d$ as a convex set in $\R^{d+1}$. In fact, 
it is represented as a $d$-symmetric convex set. 
There are other representations of $1/s$-concave function as convex sets. One of them is mentioned in (\ref{def.Ksf}).
The advantages of the representation (\ref{sLift}) are 

\begin{itemize}
\item it holds  for non-integer $s$
\par
\item  one can investigate the properties of the $s$-lifting instead of studying $1/s$-concave functions directly.  
\end{itemize}
\vskip 2mm
\noindent

\begin{proof}[Proof of Lemma \ref{lem:dual_s_lifting}]
Let $y \in \R^d$ and $\tau \in \R.$
  Then  $(y, \tau) \in \left(\slift{f}\right)^{\polar}$ if and only if the inequality
  \[
  \iprod{x}{y} + t \tau \leq 1
  \]
 holds for all $x \in \supp{f}$ and $t \in \R$ such that
 $(x, t) \in \slift{f}.$  
By the symmetry of the $s$-lifting, we conclude that 
 $(y, \tau) \in \left(\slift{f}\right)^{\polar}$ if and only if 
 \[
 \iprod{x}{y} + \enorm{\tau} f^{1/s}(x) \leq 1
 \] 
for any $x \in \supp f.$
Thus,  $\iprod{x}{y} \leq 1$ and 
\[
\enorm{\tau}^{s} \leq 
\frac{\left(1 - \iprod{x}{y}\right)^s}{f(x)}.
\]
Taking the infimum, we see that
 $(y, \tau) \in \left(\slift{f}\right)^{\polar}$ if and only if 
\[
\enorm{\tau}^{s} \leq  \inf\limits_{x \in \supp{f}}
\frac{\left(1 - \iprod{x}{y}\right)^s}{f(x)} = \slogleg{f}(  y).
\]
That is, $(y, \tau) \in \left(\slift{f}\right)^{\polar}$ if and only if  $(y, \tau)$ belongs to the $s$-lifting of $\slogleg{f}.$
\end{proof}
\vskip 3mm
\noindent
Let ${C}\subset\R^{d+1}$ be a $d$-symmetric Borel set. 
The \emph{$s$-volume} of ${C}$ is defined by
\[
\smeasure{{C}}=\int_{\R^d} 
\left[\frac{1}{2}\length{{C}\cap\ell_x}\right]^s \di x.
\] 
By Fubini's theorem, we have
\begin{equation}
\label{eq:integral_slift_repr}
\int_{\R^d} f =  \smeasure{\slift{f}} = 
\frac{s}{2}\int_{\slift{f}} \abs{\iprod{e_{d+1}}{x}}^{s-1} \di \lambda_{d+1}(x).
\end{equation}
\vskip 3mm
\begin{lem}\label{lem:int_s-polar_via_suppfunc}
Fix $s > 0.$
Let $K$ be a $d$-symmetric convex body in $\R^{d+1}.$ The functional
\[
z \mapsto
\frac{s}{2(d+s)}\int_{S^{d}} 
\frac{\abs{\iprod{e_{d+1}}{ u}}^{s-1}}{
\left(h_{\shift{K}{z}}(u)\right)^{d+s}} \di \spherem{u}
\]
is convex on the interior of $K.$  Moreover, if $K$ is the $s$-lifting of 
a $1/s$-concave function $f$ and $z \in \R^d,$ then the value of this functional at $z$ is equal to 
\[
\int_{\R^d} \slogleg{\!\parenth{\shift{f}{z}}}.
\]
\end{lem}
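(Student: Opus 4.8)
The plan is to prove the two assertions by reducing both, via \eqref{eq:integral_slift_repr} and Lemma~\ref{lem:dual_s_lifting}, to an elementary polar-coordinate computation in $\R^{d+1}$. Consider first the explicit formula: fix a $1/s$-concave $f$ with $K=\slift{f}$ and a point $z\in\R^{d}\cap\vn K$. Straight from the definition \eqref{sLift}, the $s$-lifting is compatible with horizontal translations, so $\slift{\shift{f}{z}}$ is the translate of $K$ by the vector $z$ viewed in $\R^{d}\subset\R^{d+1}$, that is, a set $\shift{K}{z'}$ with $z'=\pm z$ according to the (oppositely oriented) conventions for $\shift{f}{\cdot}$ and $\shift{K}{\cdot}$. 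Since $\slogleg{\parenth{\shift{f}{z}}}$ is again $1/s$-concave --- and \eqref{eq:integral_slift_repr} is in any case just Fubini --- applying \eqref{eq:integral_slift_repr} to it and then Lemma~\ref{lem:dual_s_lifting} gives
\[
\int_{\R^{d}}\slogleg{\parenth{\shift{f}{z}}}
=\frac{s}{2}\int_{\parenth{\shift{K}{z'}}^{\polar}}\abs{\iprod{e_{d+1}}{x}}^{s-1}\di\lambda_{d+1}(x).
\]
Because $z\in\vn K$, the body $\shift{K}{z'}$ contains the origin in its interior, so $\parenth{\shift{K}{z'}}^{\polar}$ is a convex body with radial function $u\mapsto 1/\suppfunc{\shift{K}{z'}}{u}$, $u\in S^{d}$. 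Passing to polar coordinates on $\R^{d+1}$ (Jacobian $\rho^{d}$) and evaluating the radial integral $\int_{0}^{1/h}\rho^{s+d-1}\di\rho=\tfrac{1}{d+s}h^{-(d+s)}$ turns the right-hand side into the displayed functional at $z'$; reconciling the two translation conventions yields the asserted equality.

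For the convexity statement, fix $u\in S^{d}$. As a function of $z$, $\suppfunc{\shift{K}{z}}{u}=\suppfunc{K}{u}-\iprod{z}{u}$ is affine, and it is strictly positive precisely when $z\in\vn K$, since $\vn K=\{z:\iprod{z}{u}<\suppfunc{K}{u}\text{ for all }u\in S^{d}\}$. As $d+s>0$, the map $t\mapsto t^{-(d+s)}$ is convex on $(0,\infty)$, hence $z\mapsto\parenth{\suppfunc{\shift{K}{z}}{u}}^{-(d+s)}$ is convex on $\vn K$ for every fixed $u$ (a convex function composed with an affine map). Multiplying by the non-negative weight $\tfrac{s}{2(d+s)}\abs{\iprod{e_{d+1}}{u}}^{s-1}$ and integrating over $S^{d}$ preserves convexity, so the functional is convex on $\vn K$; it is finite there because on every compact subset of $\vn K$ the number $\suppfunc{\shift{K}{z}}{u}$ is bounded between two positive constants uniformly in $u$, while $\int_{S^{d}}\abs{\iprod{e_{d+1}}{u}}^{s-1}\di\spherem{u}<\infty$ for every $s>0$.

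I do not expect a genuine difficulty: granted \eqref{eq:integral_slift_repr} and Lemma~\ref{lem:dual_s_lifting}, what remains is the polar-coordinate computation above together with the trivial convexity of $t\mapsto t^{-(d+s)}$. The two points demanding care are the bookkeeping of the oppositely oriented shift conventions for functions and for bodies, and the justification that $\parenth{\shift{K}{z}}^{\polar}$ has radial function $1/\suppfunc{\shift{K}{z}}{\cdot}$ --- which is exactly the requirement $0\in\vn\shift{K}{z}$, i.e.\ $z\in\vn K$, and which also accounts for the functional blowing up as $z$ approaches $\partial K$.
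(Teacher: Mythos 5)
Your proof is correct and follows essentially the same route as the paper's: the identity comes from Lemma~\ref{lem:dual_s_lifting} together with \eqref{eq:integral_slift_repr} and a polar-coordinate computation of the radial integral, and convexity comes from composing the affine map $z\mapsto \suppfunc{K}{u}-\iprod{z}{u}$ with the convex function $t\mapsto t^{-(d+s)}$ and integrating against a nonnegative weight. Your extra care about the opposite orientations of $\shift{f}{\cdot}$ and $\shift{K}{\cdot}$ (and about finiteness on compact subsets of $\vn K$) is warranted --- the paper silently identifies $\slift{\shift{f}{z}}$ with $\shift{K}{z}$ rather than $\shift{K}{-z}$ --- but this sign is immaterial for the convexity claim and for the later applications.
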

\begin{proof}
Since $\suppfunc{\shift{K}{z}}{u} = \suppfunc{K}{u} - \iprod{z}{u},$ 
one sees that $\parenth{\suppfunc{\shift{K}{z}}{u}}^{d+s}$ is a convex function of $z$ for a fixed $u \in S^{d}$ and any $s > 0.$
The convexity of the functional  follows immediately.
\par
\noindent
Assume now that $z \in \R^d$ and put $K = \slift{f}$ for a $1/s$-concave function $f$.
By Lemma \ref{lem:dual_s_lifting} and with  equation \eqref{eq:integral_slift_repr}, we get
\[
\int_{\R^d} \slogleg \!\parenth{\shift{f}{z}}  = 
\smeasure{\polarset{\shift{K}{z}}}=
\frac{s}{2}\int_{\polarset{\shift{K}{z}}} \abs{\iprod{e_{d+1}}{x}}^{s-1} \di \la_{d+1}(x). 
\]
Using spherical coordinates gives
\[
\int_{\R^d}  \slogleg\!\parenth{ \shift{f}{z}}   =
\frac{s}{2}\int_{\polarset{\shift{K}{z}}} \abs{\iprod{e_{d+1}}{r u}}^{s-1} r^{d}\di r \di \spherem{u} =
 \]
\[
\frac{s}{2}\int_{S^{d}} 
\abs{\iprod{e_{d+1}}{ u}}^{s-1} 
\int_{\left[0, \frac{1}{\suppfunc{\shift{K}{z}}{u}}\right]}  r^{d+s-1}\di r \di \spherem{u} .
\]
That is,
\begin{equation}
\label{eq:integ_stransf_rep}
\int_{\R^d}  \slogleg \!\parenth{\shift{f}{z}}  = 
\frac{s}{2(d+s)}\int_{S^{d}} 
\frac{\abs{\iprod{e_{d+1}}{ u}}^{s-1}}{
\parenth{\suppfunc{\shift{K}{z}}{u}}^{d+s}} \di \spherem{u}.
\end{equation}
This completes the proof.
\end{proof}

\vskip 3mm
\noindent
\begin{proof}[Proof of Theorem \ref{thm:Alexandrov_s-conc}]
Denote $K_z = \slift{\parenth{\shift{f}{z}}},$
$K^\polar_{z} = \slift{\slogleg \! \parenth{\shift{f}{z}}},$ and
\[
\Phi(z) =  \int_{\R^d}  \slogleg \!\parenth{\shift{f}{z}} .
\]
Lemma \ref{lem:int_s-polar_via_suppfunc} implies that  $\Phi$ 
is convex on the interior of the support of $f.$
\par
\noindent
We now address the second assertion of the theorem.
Denote $\Psi(z)  = \Phi(z)^{-\frac{1}{d+s}},$
and let $\nu_z $ be a measure on $S^d$ with density given by
\[
\di \nu_z(u) = 
\frac{s}{2(d+s)}
\frac{\abs{\iprod{e_{d+1}}{ u}}^{s-1}}{
\parenth{\suppfunc{K_z}{u}}^{d+s}} \di \spherem{u}.
\]
The directional derivative of
$\suppfunc{K_z}{u}$ in the direction of the $i$-th standard basis vector $e_i$ is  
$ -u[i],$ where $a[i]$  stands for the $i$-th coordinate of $a.$ 
Differentiating \eqref{eq:integ_stransf_rep},  we obtain that
\[
\Phi^\prime_{e_i}(z) = 
(d + s)  \frac{s}{2(d+s)}\int_{S^{d}} 
\frac{\abs{\iprod{e_{d+1}}{ u}}^{s-1} u [i]}{
\parenth{\suppfunc{K_z}{u} }^{d+s+1} } \di \spherem{u} = 
(d + s) \int_{S^{d}} 
\frac{u [i]}{
\suppfunc{K_z}{u}} \di \nu_z(u),
\]
and 
\[
\Phi^{\prime\prime}_{e_i e_j}(z) =
(d+s)(d+s+1)
\int_{S^{d}} 
\frac{u [i] u [j]}{
\parenth{\suppfunc{K_z}{u}}^2} \di \nu_z(u).
\]
On the other hand, 
\begin{eqnarray*}
\Phi^{\prime\prime}_{e_i e_j}(z) &=&
\frac{1}{d+s} \parenth{\frac{d+s+1}{d+s}} \Phi(z)^{-\frac{1}{d+s} - 2}  \Phi^\prime_{e_i}(z) 
\Phi^\prime_{e_j}(z) - 
\frac{1}{d+s} \Phi(z)^{-\frac{1}{d+s} - 1}  \Phi^{\prime\prime}_{e_i e_j}(z)\\
&=&
 - \frac{1}{d+s} \Phi(z)^{-\frac{1}{d+s} - 2}
\parenth{
\Phi(z)  \Phi^{\prime\prime}_{e_i e_j}(z) -
\frac{d+s+1}{d+s} \Phi^\prime_{e_i}(z) 
\Phi^\prime_{e_j}(z)
}.
\end{eqnarray*}
Therefore, $\Psi$ is concave on its support if and only if
the matrix $A_z$ given by
\[
A_z[i,j] = \Phi(z)  \Phi^{\prime\prime}_{e_i e_j}(z) -
\frac{d+s+1}{d+s}  \Phi^\prime_{e_i}(z) 
\Phi^\prime_{e_j}(z)
\]
is positive semi-definite at every point of the interior of the  support of $\Psi.$
Using the formulas for the partial derivatives of $\Phi$ and \eqref{eq:integ_stransf_rep}, we get
\begin{eqnarray*}
&&\frac{A_z[i,j]}{(d+s)(d+s+1)} =  \\
&& \int_{S^{d}} 
 \di \nu_z(u) \cdot \int_{S^{d}}  
\frac{u [i] u [j]}{
\parenth{\suppfunc{K_z}{u}}^2} \di \nu_z(u) 
-
 \int_{S^{d}} 
\frac{u [i]}{
\suppfunc{K_z}{u}} \di \nu_z(u)
 \int_{S^{d}} 
\frac{u [j]}{
\suppfunc{K_z}{u}} \di \nu_z(u).
\end{eqnarray*}
That is, $A_z$ is a covariance matrix, and hence it is positive semi-definite. 
\par
\noindent
Finally, differentiating  \eqref{eq:integ_stransf_rep} again, we get that the directional derivative $\Phi^\prime_{h}$ of $\Phi$ in direction $h \in \R^d$ is
\[
\Phi^\prime_{h} (z) = 
 \frac{s}{2}\int_{S^{d}} 
\frac{\abs{\iprod{e_{d+1}}{ u}}^{s-1} }{
\parenth{\suppfunc{K_z}{u} }^{d+s+1} }  \iprod{h}{u}\di \spherem{u} .
\]
Reversing the chain of identities in the proof of 
Lemma \ref{lem:int_s-polar_via_suppfunc}, one gets
\[
(d+s+1) \Phi^\prime_{h} (z) = 
\frac{s}{2} \int_{S^{d}} 
\abs{\iprod{e_{d+1}}{ u}}^{s-1}  \iprod{h}{u}
\int_{\left[0, \frac{1}{\suppfunc{K_z}{u}}\right]}  r^{d+s} \di r \di \spherem{u} =
\]
\[
\frac{s}{2} \int_{K^\polar_{z}}
 \abs{\iprod{e_{d+1}}{r u}}^{s-1}  \iprod{h}{r u} r^{d}\di r \di \spherem{u} =
\frac{s}{2} \int_{K^\polar_{z}} \abs{\iprod{e_{d+1}}{x}}^{s-1}
 \iprod{h}{x} \di \la_{d+1}(x).
 \]
 Since $h \in \R^d$ and by the definition of $K^\polar_{z},$ the latter is
 \[
  \int_{\R^d} \iprod{h}{y} \slogleg \! \parenth{\shift{f}{z}} (y) \di y.
 \]
 By convexity, all directional derivatives of $\Phi$ vanish at the argmin. Consequently, the above calculations show that at the argmin $\tilde{z}$, the identity 
 \[
  \int_{\R^d} {y} \slogleg \! \parenth{\shift{f}{\tilde{z}}} (y) \di y =0
 \]
holds. Thus, the origin is the barycenter of $\slogleg \! \parenth{\shift{f}{\tilde{z}}}.$
\vskip 2mm
\noindent
This finishes the proof of Theorem \ref{thm:Alexandrov_s-conc}.

\end{proof}
As was pointed to us by the anonymous referee,  using the measure representation of the integral from Lemma \ref{lem:int_s-polar_via_suppfunc}, combined with the appropriate version of Minkowski inequality for means yield the inequality of Theorem \ref{thm:Alexandrov_s-conc}. Let us sketch the idea. 

\vskip 2mm
\noindent

In the notation used in the proof of Theorem \ref{thm:Alexandrov_s-conc},
\[
\Psi(z) = \norm{h_K - \iprod{z}{\cdot}}_{L_{-(d+s)} (\tilde{\nu}_s)},
\]
where $\tilde{\nu}_s $  is a measure on $S^d$ with density given by
\[
\di \tilde{\nu}_s(u) = 
\frac{s}{2(d+s)}
{\abs{\iprod{e_{d+1}}{ u}}^{s-1}} \di \spherem{u}.
\]
Using the Minkowski inequality for means, one gets:
\[
\Psi(\lambda z_1 + (1- \lambda) z_2) = 
 \norm{\lambda(h_K - \iprod{z_1}{\cdot}) + (1- \lambda)(h_K - \iprod{z_2}{\cdot})}_{L_{-(d+s)} (\tilde{\nu}_s)} \geq 
\]
\[
\lambda   \norm{h_K - \iprod{z_1}{\cdot}}_{L_{-(d+s)} (\tilde{\nu}_s)} +
(1- \lambda)  \norm{h_K - \iprod{z_2}{\cdot}}_{L_{-(d+s)} (\tilde{\nu}_s)} = \lambda \Psi(z_1) + (1- \lambda)\Psi(z_2).
\]
\vskip 10mm

\section{Log-concave functions}
\label{sec:log_conc_as_limit}
In this section, we study the properties of log-concave functions related to the polar transform and prove Theorems \ref{thm:approx_by_s-conc_dual} and \ref{thm:Alexandrov_log-conc}.  

\vskip 2mm
\noindent
\subsection{Consequences of Theorem \ref{thm:approx_by_s-conc_dual}}
\label{sec:consequences}
\par
\noindent
Before proving Theorem \ref{thm:approx_by_s-conc_dual}, we derive several results from it including Theorem \ref{thm:Alexandrov_log-conc}.
\par
\noindent
\begin{proof}[Proof of Theorem  \ref{thm:Alexandrov_log-conc}]
Recall that
\begin{equation}
\label{eq:s-to-zero_limit}
\lim\limits_{s \to + \infty}
\parenth{\lambda a^{\frac{1}{d+s}} + (1- \lambda) b^{\frac{1}{d+s}}}^{d+s} =
a^{\lambda}b^{1-\lambda}
\end{equation}
for any positive real numbers $a$ and $b$. 
\par
\noindent 
Let now $f$ be a proper log-concave function  containing the origin in the interior of its support.
Then, to prove Theorem \ref{thm:Alexandrov_log-conc},  it suffices to show that
\begin{equation}
\label{eq:log-conv_approx_conv_integ}
\int_{\R^d} \slogleg[\infty] f  = \lim\limits_{s \to + \infty} s^d \int_{\R^d} \slogleg f_s ,
\end{equation}
where $f_s$ is as in (\ref{eq:logconv_approx_s_conc}).
Indeed, the convexity of $z \mapsto  \int_{\R^d} \slogleg[\infty]\! \parenth{\shift{f}{z}}$ follows immediately from  assertion (\ref{ass:1_thm:Alexandrov_s-conc}) of Theorem \ref{thm:Alexandrov_s-conc} and \eqref{eq:log-conv_approx_conv_integ}. The log-concavity of 
\[
z \mapsto  \frac{1}{\int_{\R^d} \slogleg[\infty]\! \parenth{\shift{f}{z}}}
\]
 follows  from  assertion (\ref{ass:2_thm:Alexandrov_s-conc}) of Theorem \ref{thm:Alexandrov_s-conc} and  identity \eqref{eq:s-to-zero_limit}.
\par
\noindent
Identity (\ref{eq:log-conv_approx_conv_integ}) follows from Theorem \ref{thm:approx_by_s-conc_dual} and 
\cite[Lemma 3.2]{artstein2004santalo}. We use the following  simplified version of this lemma.
\vskip 2mm
\noindent
\begin{lem}\cite{artstein2004santalo} \label{lem:convergence_log-conc}
Let $\{f_i\}_1^{\infty}$ be a sequence of log-concave functions converging to a log-concave function $f$ of finite integral on a dense subset $A \subset \R^d.$
Then $\int_{\R^d} f_n \to \int_{\R^d} f.$ 
\end{lem}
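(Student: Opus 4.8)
The plan is to prove the two one–sided estimates $\liminf_n\int_{\R^d}f_n\ge\int_{\R^d}f$ and $\limsup_n\int_{\R^d}f_n\le\int_{\R^d}f$. We may assume $\int_{\R^d}f>0$ (the remaining case is degenerate and does not arise in the applications) and, after a translation, that the origin lies in $U:=\vn(\supp f)$. On the respective supports write $f=e^{\phi}$ and $f_n=e^{\phi_n}$; then $\phi,\phi_n$ are concave, $\phi$ is finite and continuous on the open convex set $U$, and $\phi_n\to\phi$ on the dense subset $A\cap U$ of $U$. By the standard convergence theorem for convex (equivalently, concave) functions, $\phi_n\to\phi$, hence $f_n\to f$, uniformly on compact subsets of $U$. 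Since $\int_{\R^d}f>0$, the convex set $\supp f$ is full–dimensional, so $\supp f\setminus U\subseteq\partial(\supp f)$ is Lebesgue–null, and Fatou's lemma on $U$ gives $\liminf_n\int_{\R^d}f_n\ge\liminf_n\int_U f_n\ge\int_U f=\int_{\R^d}f$, which is the lower bound.

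For the upper bound, fix once and for all a closed ball $B=B(0,r_0)\subseteq U$; by the local uniform convergence there are $\delta,M\in(0,\infty)$ and an index $N$ with $\delta\le f_n\le M$ on $B$ for all $n\ge N$, and $C_0:=\sup_n\int_B f_n<\infty$. We first note that $f_n\to f$ almost everywhere on $\R^d$: on $U$ this is already known, and if $x\notin\closure(\supp f)$ then $x$ lies in the closure of the open set $\mathcal C_x=\{\mu x+(1-\mu)q:\mu\in(0,1),\ q\in B\}$; picking, by density of $A$ and upper semicontinuity of $f$ (with $f(x)=0$), a point $q_1=\mu x+(1-\mu)q\in\mathcal C_x\cap A$, log–concavity gives $f_n(x)^\mu\le f_n(q_1)\,f_n(q)^{-(1-\mu)}\le f_n(q_1)\,\delta^{-(1-\mu)}$ for $n\ge N$, so $\limsup_n f_n(x)\le f(q_1)^{1/\mu}\delta^{-(1-\mu)/\mu}$, which tends to $0$ as $q_1\to x$ inside $\mathcal C_x\cap A$. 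Since $\partial(\supp f)$ is null, the claim follows. It therefore suffices to show that $\{f_n\}$ is uniformly integrable and tight at infinity; together with $f_n\to f$ a.e., Vitali's convergence theorem then yields $\int_{\R^d}f_n\to\int_{\R^d}f$.

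The crux is a uniform tail bound of Borell/Prékopa–Leindler type: there are $R_0<\infty$ and $\alpha,\beta>0$, depending only on $d,r_0,\delta,M,C_0$, with $f_n(x)\le\alpha\,e^{-\beta\abs{x}}$ whenever $\abs{x}\ge R_0$ and $n\ge N$. To obtain it, note that the convex sublevel set $\{f_n\ge\delta\}$ contains $B$ and has volume at most $\delta^{-1}\int_{\R^d}f_n$; bounding this volume bounds its diameter, and extrapolating the affine majorant of $\phi_n$ from $B$ to a point just outside $\{f_n\ge\delta\}$ produces the exponential decay. The a priori finiteness of $\sup_n\int_{\R^d}f_n$ required here is itself a consequence of log–concavity and the local uniform convergence on $U$: the mass of $f_n$ cannot escape to infinity because the values of $f_n$ on compacta of $U$ are pinned near those of $f$, and the cone estimate $f_n(x)\lesssim\abs{x}^{-1}\int_{\conv(B\cup\{x\})}f_n$ — valid with no a priori integral bound — confines it. Granting the tail bound, $\sup_{n\ge N}\int_{\abs{x}\ge R}f_n\le\int_{\abs{x}\ge R}\alpha e^{-\beta\abs{x}}\to0$ as $R\to\infty$ (tightness), while on each ball $B(0,R)$ the $f_n$ with $n\ge N$ are uniformly bounded (again by chord–extrapolation from $B$) and converge a.e.\ to $f$, so $\int_{B(0,R)}f_n\to\int_{B(0,R)}f$; letting $R\to\infty$ gives $\limsup_n\int_{\R^d}f_n\le\int_{\R^d}f$. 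The entire difficulty is in the tail bound — keeping the exponent $\beta$ bounded away from $0$ uniformly in $n$; the passage to the limit is then routine.
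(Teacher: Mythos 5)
The paper offers no proof of this lemma at all --- it is imported verbatim (in simplified form) from \cite{artstein2004santalo}, Lemma~3.2 --- so there is nothing in-paper to compare against; I can only assess your argument on its own terms. Your architecture is the standard and correct one: Fatou plus locally uniform convergence of concave functions on $\vn(\supp f)$ for the lower bound, and a uniform exponential tail bound plus a.e.\ convergence for the upper bound. The lower-bound half is essentially complete (modulo the routine check that $\phi_n$ is finite on compacta of $U$ for large $n$, needed before invoking Rockafellar's convergence theorem), and you are right that the hypothesis $\int f>0$ must be added: the statement as printed fails for $f\equiv 0$ and $f_n=\mathbf{1}_{[n,n+1]}$.

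The gap is in the step you yourself call the crux, the uniform tail bound $f_n(x)\le\alpha e^{-\beta\enorm{x}}$. Your derivation bounds $\vol{d}\{f_n\ge\delta\}$ by $\delta^{-1}\int_{\R^d}f_n$, which presupposes exactly the uniform bound on $\int_{\R^d}f_n$ that the tail estimate is meant to deliver; this is circular. The patch you offer --- that $\sup_n\int_{\R^d}f_n<\infty$ follows from ``log-concavity and the local uniform convergence on $U$'' --- rests on a false premise: convergence on compacta of $U=\vn(\supp f)$ gives no control on $f_n$ outside $\closure(\supp f)$ (take $f=\mathbf{1}_{[0,1]}$, $f_n=\mathbf{1}_{[0,n]}$: these agree with $f$ on $(0,1)$ yet $\int f_n\to\infty$; only the convergence on $A$ \emph{far from} $\supp f$ excludes this, and your tail argument never uses it). Relatedly, constants $\alpha,\beta,R_0$ cannot depend only on $d,r_0,\delta,M,C_0$, since the constant function $\delta$ meets all those local constraints and has no decay. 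The repair is to feed in the decay of the limit: $f$ is log-concave and integrable with $B\subset\vn(\supp f)$, hence $f\le Ce^{-c\enorm{x}}$, so one may pick $R_1$ with $f<\delta e^{-1}$ on a neighbourhood of the sphere $\enorm{x}=R_1$, cover that sphere by finitely many balls of radius less than $r_0/2$, choose a point of $A$ in each, and use pointwise convergence at these \emph{finitely many} points to obtain a single $N_0$ with $f_n\le\delta e^{-1/2}$ at all of them for $n\ge N_0$. For $\enorm{x}$ large, the segment from a suitable $q\in B$ to $x$ passes through one such point $a_j=(1-\mu)q+\mu x$ with $\mu\asymp R_1/\enorm{x}$, and $f_n(a_j)\ge f_n(q)^{1-\mu}f_n(x)^{\mu}\ge\delta^{1-\mu}f_n(x)^{\mu}$ gives $f_n(x)\le\delta e^{-1/(2\mu)}\le\delta e^{-c'\enorm{x}}$, uniformly in $n\ge\max(N,N_0)$ and with no a priori integral bound. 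With that substitution the remainder of your argument goes through.
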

\par
\noindent
This completes the proof of Theorem \ref{thm:Alexandrov_log-conc}.
\end{proof}

Also, there is a direct proof of Theorem \ref{thm:Alexandrov_log-conc}.
\par
\noindent
\begin{proof}[Direct proof of Theorem  \ref{thm:Alexandrov_log-conc}]
Denote 
$\Phi(z) =  \int_{\R^d} \slogleg[\infty]\! \parenth{\shift{f}{z}},$
and let $f = e^{-\psi}.$
One has 
\[
\Phi(z) =  \int_{\R^d} e^{- \slogleg[]\psi(y) - \iprod{z}{y}}  \di y.
\]
By the H\"older inequality,
\[\Phi\! \parenth{(1-\lambda) z_1+\lambda z_2} = 
\int_{\R^d} e^{- (1-\lambda)\left( \slogleg[]\varphi(y)-
 \iprod{z_1}{y}\right)
 -\lambda\left(\slogleg[]\varphi(y)- \iprod{z_2}{y}\right)} \di y 
\]
\[
\leq 
\parenth{\int_{\R^d} e^{- \slogleg[]\varphi(y)-
 \iprod{z_1}{y}} \di y}^{1-\lambda}
\parenth{ \int_{\R^d} e^{-\slogleg[]\varphi(y)- \iprod{z_2}{y}} \di y}^{\lambda} = 
\Phi(z_1)^{1 - \lambda} \Phi(z_2)^{\lambda}.
\]
So, the function $\Phi$ is log-convex. Consequently, it is convex and the desired functional, which is reciprocal of $\Phi,$ is log-concave.

%
%
\end{proof}
\vskip 3mm
\noindent
The next  corollary  is another consequence of Theorem \ref{thm:approx_by_s-conc_dual}.
\vskip 2mm
\noindent
\begin{cor}\label{cor:approx_mahler_int}
Let $f : \R^d \to [0, \infty)$ be a proper log-concave function containing the origin in the interior of its support. Then
\[
\int f  \cdot \int \slogleg[\infty] f  = \lim\limits_{s \to + \infty}
s^d \int f_s \cdot \int \slogleg f_s.
\]
\end{cor}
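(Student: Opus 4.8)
The plan is to deduce Corollary \ref{cor:approx_mahler_int} directly from the two limit statements already at hand, namely Theorem \ref{thm:approx_by_s-conc_dual} together with the identity \eqref{eq:log-conv_approx_conv_integ} that was established inside the proof of Theorem \ref{thm:Alexandrov_log-conc}. The right-hand side factors as
\[
\lim\limits_{s \to + \infty} s^d \int f_s \cdot \int \slogleg f_s
= \left(\lim\limits_{s \to + \infty} \int f_s\right)\cdot\left(\lim\limits_{s \to + \infty} s^d \int \slogleg f_s\right),
\]
provided both limits on the right exist and are finite; the desired conclusion then follows because the second factor equals $\int \slogleg[\infty] f$ by \eqref{eq:log-conv_approx_conv_integ}, so it only remains to check that the first factor equals $\int f$.

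First I would handle the factor $\int f_s \to \int f$. Since $f$ is log-concave and $f_s(x) = \truncf{1 + \tfrac{\log f(x)}{s}}^s$, the functions $f_s$ are log-concave (indeed $1/s$-concave) and converge to $f$ pointwise on $\R^d$ as $s \to \infty$, as noted just after \eqref{eq:logconv_approx_s_conc}. Pointwise convergence on all of $\R^d$ is in particular convergence on a dense subset, so Lemma \ref{lem:convergence_log-conc} applies and gives $\int_{\R^d} f_s \to \int_{\R^d} f$, which is finite by hypothesis. This is the easy half and requires no new work.

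Next I would handle the factor $s^d \int \slogleg f_s \to \int \slogleg[\infty] f$. This is precisely the content of the identity \eqref{eq:log-conv_approx_conv_integ}, whose justification was already given: a change of variables $x \mapsto x/s$ in the integral of $\slogleg f_s$ produces the factor $s^d$, Theorem \ref{thm:approx_by_s-conc_dual} gives $\slogleg f_s(x/s) \to \slogleg[\infty] f(x)$ locally uniformly on the set $A$ of points not on the boundary of $\supp \slogleg[\infty] f$, which is dense in $\R^d$, and since all functions involved are log-concave Lemma \ref{lem:convergence_log-conc} upgrades this to convergence of the integrals. Thus this factor converges to the finite quantity $\int \slogleg[\infty] f$ (note $\int \slogleg[\infty] f$ is finite because the origin lies in the interior of $\supp f$, so $\slogleg[\infty] f$ is a genuine log-concave function whose integral one controls). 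Multiplying the two convergent sequences of positive reals yields the claimed product formula.

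The only genuine subtlety — and hence the step I expect to be the main obstacle — is the interchange of the limit with the product: one must be sure that \emph{both} one-variable limits $\int f_s \to \int f$ and $s^d\int \slogleg f_s \to \int \slogleg[\infty] f$ hold separately with finite limiting values, so that $\lim (a_s b_s) = (\lim a_s)(\lim b_s)$ is legitimate. Both are already in place (the first from Lemma \ref{lem:convergence_log-conc} applied to $f_s$, the second from \eqref{eq:log-conv_approx_conv_integ}), so once those are cited the corollary is immediate; there is no new analytic estimate to carry out.
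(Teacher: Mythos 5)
Your proof is correct and follows exactly the route the paper intends: the corollary is the product of the two separate limits $\int f_s \to \int f$ (from Lemma \ref{lem:convergence_log-conc} applied to the pointwise-convergent $f_s$) and $s^d\int \slogleg f_s \to \int \slogleg[\infty] f$ (identity \eqref{eq:log-conv_approx_conv_integ}, proved via Theorem \ref{thm:approx_by_s-conc_dual}). The paper gives no separate argument for the corollary beyond citing these facts, so there is nothing to add.
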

\vskip 5mm
\subsection{Proof of Theorem 3}

To prove  Theorem \ref{thm:approx_by_s-conc_dual}, we recall several  definitions and facts of convex analysis which can be found in e.g., \cite{rockafellar1970convex}.
\vskip 2mm
\noindent
We start with the definition of the classical \emph{convex conjugate transform} or 
 \emph{Legendre transform} $\legendre$ defined for functions $\phi: \R^d \to \R\cup \{+\infty\}$ by
\[
	(\slogleg[]{\phi})(y) = \sup\limits_{x \in \R^d} \{\iprod{x}{y} - \phi(x)\}.
\]
Thus,  for  $f = e^{-\psi} : \R^d \to [0, \infty)$, 
\[
	\slogleg[\infty]{f}(y) = e^{- (\slogleg[] \psi)(y)}.
\]
\par
\noindent
A vector $p$ is said to be a \emph{subgradient} of a convex function 
$\psi$ on $\R^d$ at the point $x$ if
\[
\psi(y) \geq \psi(x) + \iprod{p}{y-x}
\]
for all $y \in \R^d$.
The set of all subgradients of $\psi$ at $x$ is called the \emph{subdifferential}
of $\psi$ at $x$ and is denoted by $\partial \psi(x)$ 
\par
\noindent
The \emph{effective domain}  $\dom \psi$ of a convex function $\psi$ on $\R^d$ is the set 
\[
\dom \psi = \left\{x \st  \psi(x) < + \infty\right\}.
\]
The \emph{epigraph} of a convex function $\psi$ on $\R^d$ is the set
\[
\left\{(x, \xi) \st x\in \dom \psi, \; \xi \in \R, \;    \xi \geq \psi(x) \right\}.
\]
\vskip 3mm
\noindent
In the remainder of the paper we work with convex functions that have  non-empty effective domain and closed epigraph.
\vskip 2mm
\noindent
The following statement is a direct consequence of the definition of subdifferential.
\vskip 2mm
\noindent
\begin{lem}[Geometric meaning of subdifferential]
\label{lem:geom_meaning_of_subdif}
Let $\phi: \R^d \to \R \cup \{+\infty\}$ be a lower semi-continuous, convex function with non-empty effective domain. 
Let $x \in \dom \phi$.
If $p \in \R^d$ and a negative number $\xi$ are such that
\[
 \iprod{(p, \xi)}{(y, \phi(y))} \leq 
  \iprod{(p, \xi)}{(x, \phi(x))}
\]
for all $y \in \dom \psi,$ then 
there are a subgradient $q$ of $\phi$ at $x$ and a positive constant $\alpha$
such that
\[
(p, \xi) = \alpha (q,-1).
\]
\end{lem}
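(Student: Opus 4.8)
The plan is to unwind the hypothesis and match it term by term with the defining inequality of a subgradient; there is essentially no obstacle beyond bookkeeping the sign of $\xi$. First I would use that $\xi$ is \emph{negative} to define the positive constant $\alpha := -\xi$ and the vector $q := p/\alpha = -p/\xi$. With these choices one has $(p,\xi) = \alpha\,(q,-1)$ by construction, so the entire content of the lemma reduces to checking that $q \in \partial \phi(x)$.

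To verify this, I would expand the inner products in $\R^{d+1}$: the assumed inequality
\[
\iprod{(p,\xi)}{(y,\phi(y))} \le \iprod{(p,\xi)}{(x,\phi(x))}, \qquad y \in \dom \phi,
\]
is the same as $\iprod{p}{y} + \xi\,\phi(y) \le \iprod{p}{x} + \xi\,\phi(x)$. Dividing through by $\alpha = -\xi > 0$ reverses the inequality and yields
\[
\iprod{q}{y} - \phi(y) \le \iprod{q}{x} - \phi(x), \qquad y \in \dom \phi,
\]
that is, $\phi(y) \ge \phi(x) + \iprod{q}{y - x}$ for all $y \in \dom \phi$.

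Finally I would extend this to all of $\R^d$: for $y \notin \dom \phi$ we have $\phi(y) = +\infty$, so the inequality $\phi(y) \ge \phi(x) + \iprod{q}{y-x}$ holds trivially. Hence $q$ satisfies the defining property of a subgradient of $\phi$ at $x$, and together with $(p,\xi) = \alpha(q,-1)$ this is exactly the assertion. The only step that demands attention is the sign of $\xi$: it is precisely the negativity of $\xi$ that makes $\alpha$ a legitimate positive scaling factor and forces the correct direction of the inequality after dividing. Convexity and lower semi-continuity of $\phi$ play no role in this direction of the argument; they are needed only so that the word ``subgradient'' refers to the object defined above.
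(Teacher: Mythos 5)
Your proof is correct and is exactly the direct verification the paper has in mind: the paper gives no proof, remarking only that the lemma is ``a direct consequence of the definition of subdifferential,'' and your computation (setting $\alpha=-\xi$, $q=p/\alpha$, dividing the inequality by $-\xi>0$, and noting the inequality is vacuous off $\dom\phi$) is precisely that consequence spelled out.
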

\vskip 2mm
\noindent
\begin{rem}
The  assertion of Lemma \ref{lem:geom_meaning_of_subdif} can be rephrased as:  $(p, \xi)$ belongs to the \emph{normal cone} to the epigraph of $\psi$ at point $(x, \psi(x))$.
\end{rem}
\vskip 3mm
\noindent
The following facts on the  subgradient  can be found in e.g., 
\cite[Chapter 23]{rockafellar1970convex}. 
See also \cite{clarke1990optimization}.
\vskip 2mm
\noindent
Let $\psi: \R^d \to \R \cup \{+\infty\}$ be a lower semi-continuous convex function with non-empty effective domain, and let 
$z$ be in the interior of $\dom \slogleg[]{\psi}$. 
Let $\epsilon >0$ be such that  $z + \epsilon \, \ball{d}_2 \subset \dom{\slogleg[]{\psi}}$. 
Then $\slogleg[]{\psi}$ is Lipschitz on $z + \epsilon\,  \ball{d}_2$ and, denoting  the Lipschitz constant by $C$, 
 the following statements hold.
\begin{enumerate}
 \item Fact 1: 
 $\partial \slogleg[] \psi (z)$ is a non-empty compact convex set and
 $\partial \slogleg[] \psi (z) \subset C \ball{d}_2$. 
 \vskip 2mm
 \item Fact 2: 
 Let $q  \in \partial \slogleg[]\psi(z).$ Then $z \in \partial \psi(q),$ and
 \begin{equation}
\label{eq:fenchel_moro_lem}
\psi(q) + \slogleg[]\psi(z) = \iprod{q}{z}
\end{equation}
\vskip 2mm
 \item Fact 3: 
 Let $q  \in \partial \slogleg[]\psi(z).$ Then, by the previous assertion,
 \begin{equation}
 \label{eq:dual_subgrad_norm}
 \enorm{\psi(q)} \leq \enorm{\slogleg[] \psi(z)} + C\enorm{z}.
 \end{equation}
\vskip 2mm
 \item Fact 4: 
Let $q  \in \partial \slogleg[]\psi(z)$. 
Then for all $p \in \R^d$, 
 \begin{equation}
\label{eq:conjugate_func_ineq} 
\iprod{z}{p} - \psi(p) \leq \slogleg[]\psi(z).
\end{equation}
 \end{enumerate} 
\vskip 3mm
\noindent
To prove Theorem \ref{thm:approx_by_s-conc_dual}, we will also need the following lemmas.
\vskip 2mm
\begin{lem}\label{lem:point_in_supp_L_infty}
Let $f \colon \R^d \to [0, \infty)$ be a proper log-concave function.
Let $z$ be a point in the interior of $\supp \slogleg[\infty] f.$
Let $C$ be the Lipschitz constant of the convex function 
$\slogleg[]\psi = -\log \slogleg[\infty] f$ 
on some open neighborhood of $z.$
Then for any $s >  \enorm{\slogleg[] \psi(z)} + C\enorm{z},$
\begin{equation}
\label{eq:spolar_sfunc_value}
\slogleg f_s \parenth{\frac{z}{\slogleg[]\psi(z) + s}} = 
\parenth{{1+ \frac{\slogleg[]\psi(z)}{s}}}^{-s}.
\end{equation}
\end{lem}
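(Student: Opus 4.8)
The plan is to compute $\slogleg f_s$ directly from the definition of the $s$-polar transform, using the explicit formula $f_s(x) = \truncf{1 + \frac{\log f(x)}{s}}^s$. Writing $f = e^{-\psi}$ with $\psi$ convex and lower semi-continuous, we have $f_s(x) = \truncf{1 - \frac{\psi(x)}{s}}^s$, so that
\[
\slogleg f_s(w) = \inf_{\{x \st f_s(x) > 0\}} \frac{\truncf{1 - \iprod{x}{w}}^s}{\truncf{1 - \frac{\psi(x)}{s}}^s}
= \parenth{\inf_{\{x \st \psi(x) < s\}} \frac{\truncf{1 - \iprod{x}{w}}}{1 - \frac{\psi(x)}{s}}}^s,
\]
where the last step uses that $t \mapsto t^s$ is increasing on $[0,\infty)$ and that the numerator is nonnegative. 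Thus it suffices to evaluate the scalar infimum with $w = \frac{z}{\slogleg[]\psi(z) + s}$, and then show it equals $\parenth{1 + \frac{\slogleg[]\psi(z)}{s}}^{-1}$. Note first that the hypothesis $s > \enorm{\slogleg[]\psi(z)} + C\enorm{z}$ together with Fact~3 guarantees that for any subgradient $q \in \partial \slogleg[]\psi(z)$ we have $\enorm{\psi(q)} < s$, so $q$ lies in $\dom \psi$ with $\psi(q) < s$; this will be the point achieving the infimum.

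First I would establish the lower bound: for every $x$ with $\psi(x) < s$,
\[
\frac{\truncf{1 - \iprod{x}{w}}}{1 - \frac{\psi(x)}{s}} \geq \parenth{1 + \frac{\slogleg[]\psi(z)}{s}}^{-1},
\]
i.e. $s\bigl(1 - \iprod{x}{w}\bigr) \geq \frac{s}{\slogleg[]\psi(z)+s}\bigl(s - \psi(x)\bigr)$. Substituting $w = \frac{z}{\slogleg[]\psi(z)+s}$ and clearing the positive denominator $\slogleg[]\psi(z)+s$ (which is positive since $s > \enorm{\slogleg[]\psi(z)}$), this reduces after simplification to $\psi(x) \geq \iprod{x}{z} - \slogleg[]\psi(z)$, which is exactly Fenchel's inequality \eqref{eq:conjugate_func_ineq} (Fact~4). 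This also shows the left-hand truncation is irrelevant because the bound forces $1 - \iprod{x}{w} > 0$ on the relevant range. Second, I would exhibit equality: pick $q \in \partial \slogleg[]\psi(z)$, which is nonempty and compact by Fact~1 since $z$ is interior to $\supp \slogleg[\infty] f = \dom \slogleg[]\psi$. By Fact~2, $\psi(q) + \slogleg[]\psi(z) = \iprod{q}{z}$, and by the remark above $\psi(q) < s$. Plugging $x = q$ into the quotient and using this identity turns both the numerator and the denominator into the same multiple of $\slogleg[]\psi(z)+s$ minus $\psi(q)$-type terms, yielding exactly $\parenth{1 + \frac{\slogleg[]\psi(z)}{s}}^{-1}$.

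Combining the two steps, the scalar infimum equals $\parenth{1 + \frac{\slogleg[]\psi(z)}{s}}^{-1}$, and raising to the $s$-th power gives \eqref{eq:spolar_sfunc_value}. The only genuinely delicate point is the bookkeeping around the truncation $\truncf{\cdot}$ and the requirement that the infimum be taken over $\{x \st f_s(x) > 0\} = \{x \st \psi(x) < s\}$ — one must check that the candidate point $q$ actually satisfies $\psi(q) < s$ (handled by Fact~3 and the hypothesis on $s$) and that points with $1 - \iprod{x}{w} \leq 0$ do not spoil the infimum (they contribute value $0$ to the numerator, but then the Fenchel bound already shows no such $x$ with $\psi(x) < s$ can beat the claimed value, since on $\{\psi(x) < s\}$ we derived $1 - \iprod{x}{w} > 0$). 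Everything else is routine algebraic manipulation.
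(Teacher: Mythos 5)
Your proof is correct and is essentially the paper's own argument: the lower bound is Fenchel's inequality \eqref{eq:conjugate_func_ineq}, equality is attained at a subgradient $q \in \partial \slogleg[]\psi(z)$ via the Fenchel--Moreau identity \eqref{eq:fenchel_moro_lem}, and \eqref{eq:dual_subgrad_norm} together with the hypothesis on $s$ gives $\psi(q) < s$ so that $q$ is admissible, exactly as in the paper. The only difference is presentational: the paper packages these same inequalities as the statement that the point $\frac{1}{1+\slogleg[]\psi(z)/s}\parenth{\frac{z}{s},1}$ lies on the boundary of $\parenth{\slift{f_s}}^{\polar}$ and then invokes Lemma \ref{lem:dual_s_lifting}, whereas you manipulate the defining infimum of $\slogleg f_s$ directly.
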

\vskip 2mm
\noindent
\begin{proof}
Denote $\psi = - \log f$. By the above Fact 1, there is $q \in \partial \slogleg[]\psi(z)$.
By \eqref{eq:dual_subgrad_norm}, 
\[
f_s^{1/s}(q) = \truncf{1 - \frac{\psi(q)}{s}} = 1 - \frac{\psi(q)}{s} > 0.
\]
Using \eqref{eq:conjugate_func_ineq} and \eqref{eq:fenchel_moro_lem}, one gets for all $p \in \supp{f_s}$ 
\[
\iprod{ \parenth{p, {f_s(p)^{1/s}}}}{\parenth{\frac{z}{s},1}} = 
1+ \frac{\iprod{z}{p} - \psi(p)}{s} \leq 1+ \frac{\slogleg[]\psi(z)}{s}.
\]
Since equality in the rightmost inequality is achieved at $p =q$ and since $1+ \frac{\slogleg[]\psi(z)}{s} > 0$ by the choice of $s$, 
we conclude that the vector 
\[
\frac{1}{1+ \frac{\slogleg[]\psi(z)}{s}} \parenth{\frac{z}{s},1} = 
\parenth{\frac{z}{\slogleg[](z) + s}, \frac{1}{1+ \frac{\slogleg[]\psi(z)}{s}}}
\]
belongs to the boundary of $\parenth{\slift{f_s}}^\circ$.
Thus, \eqref{eq:spolar_sfunc_value} follows from Lemma \ref{lem:dual_s_lifting}.
\end{proof}
\vskip 3mm
\noindent
\begin{lem}\label{lem:s_dual_point_representation}
Let $f=e^{-\psi} \colon \R^d \to [0, \infty)$ be a proper log-concave function such that the origin is in the interior of $ \supp   f_s.$
Denote $\slogleg[]\psi = - \log\slogleg[\infty] f.$ 
If $z$ is in the interior of $\supp \slogleg{f_s},$
then there exists $z_s$ in  the support of $\slogleg[\infty]{f}$
such that
\begin{equation}
\label{eq:spolar_sfunc_value2}
z= \frac{z_s }{\slogleg[]\psi(z_s) + s}
\quad \text{and} \quad 
\slogleg f_s \parenth{{z}} =
\parenth{{1+ \frac{\slogleg[]\psi(z_s)}{s}}}^{-s}.
\end{equation}
\end{lem}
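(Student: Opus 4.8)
To prove Lemma~\ref{lem:s_dual_point_representation} the plan is to pass to the $s$-lifting and let a boundary contact point of the polar body produce $z_s$ via a subgradient of $\psi$. Set $K=\slift{f_s}$; the hypothesis gives $0\in\vn K$, the set $K$ is closed and $d$-symmetric, and by Lemma~\ref{lem:dual_s_lifting} one has $\polarset{K}=\slift{\slogleg{f_s}}$. Put $\tau=\parenth{\slogleg{f_s}(z)}^{1/s}$, which is positive because $z\in\supp\slogleg{f_s}$, and $w=(z,\tau)$. Since $w$ is the topmost point of the fiber $\polarset{K}\cap\ell_z$, it lies on $\partial\polarset{K}$, and the value formula is already built in: $\slogleg{f_s}(z)=\tau^{s}$.

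First I would produce a contact point. Because $\polarset{K}$ is closed convex and contains $(0,\pm c)$ for some $c>0$ (as $\slogleg{f_s}(0)>0$), it has a supporting hyperplane at $w$; the assumption that $z$ lies in the interior of $\supp\slogleg{f_s}$, which is the projection of $\polarset{K}$ to $\R^d$, forbids a horizontal normal and lets us write this hyperplane as $\{v:\iprod{\bar x}{v}=1\}$ with $\bar x\in\polarset{\polarset{K}}=K$ and $\iprod{\bar x}{w}=1$. Since $w\in\polarset{K}$ means $\iprod{y}{w}\le 1$ for all $y\in K$, we immediately get $\bar x\in\partial K$ and that $w$ is an outer normal to $K$ at $\bar x$. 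Write $\bar x=(p,t)$. Using $d$-symmetry together with $\iprod{\bar x}{w}=1$ one gets $t\tau\ge 0$, hence $t\ge 0$; $t=0$ is excluded because then $p\ne 0$ and $z$ would be a maximizer of $\iprod{p}{\cdot}$ on $\supp\slogleg{f_s}$, contradicting interiority; and $0<t<\parenth{f_s(p)}^{1/s}$ is excluded because then $(p,t\pm\epsilon)\in K$ for small $\epsilon$, forcing the impossible $\tau\epsilon\le 0$. Hence $t=\parenth{f_s(p)}^{1/s}>0$, i.e.\ $\psi(p)<s$ and $t=1-\psi(p)/s$.

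Second, I would read off the subgradient. Testing the outer-normal inequality against the points $\parenth{x,(f_s(x))^{1/s}}$ for $x\in\supp f_s$ gives $\iprod{z}{x}-\frac{\tau}{s}\psi(x)\le\iprod{z}{p}-\frac{\tau}{s}\psi(p)$, and a short segment argument (comparing with $(1-\la)p+\la x$ for small $\la$, using convexity of $\psi$) upgrades this to all $x\in\dom\psi$, hence all of $\R^d$. Dividing by $\tau/s>0$ and setting $z_s:=\frac{s}{\tau}z$, this says exactly that $z_s\in\partial\psi(p)$; therefore $z_s\in\dom\slogleg[]{\psi}=\supp\slogleg[\infty]{f}$ and, by Fenchel's equality, $\slogleg[]{\psi}(z_s)=\iprod{z_s}{p}-\psi(p)$. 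Substituting $z=\frac{\tau}{s}z_s$ and $t=1-\psi(p)/s$ into $\iprod{\bar x}{w}=1$ collapses to $\tau\parenth{1+\frac{1}{s}\slogleg[]{\psi}(z_s)}=1$; this rearranges to $z=z_s/\parenth{\slogleg[]{\psi}(z_s)+s}$, and together with $\slogleg{f_s}(z)=\tau^{s}$ it gives $\slogleg{f_s}(z)=\parenth{1+\slogleg[]{\psi}(z_s)/s}^{-s}$, the two asserted identities.

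I expect the main obstacle to be the geometric bookkeeping in the first step: excluding the horizontal supporting hyperplane at $w$ and the two degenerate positions $t=0$ and $0<t<(f_s(p))^{1/s}$ of the contact point, which is exactly where the hypotheses that $z$ is interior to $\supp\slogleg{f_s}$ and that the origin is interior to $\supp f_s$ are genuinely used, with the added care that $K$ may be unbounded when $\supp f$ is. The remaining points---the identity $\polarset{\polarset{K}}=K$ for the possibly unbounded closed convex set $K$, and the extension of the key inequality from $\supp f_s$ to all of $\dom\psi$---are routine.
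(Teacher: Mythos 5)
Your proof is correct and follows essentially the same route as the paper: pass to the $s$-lifting, use Lemma \ref{lem:dual_s_lifting} to identify $\slift{\slogleg f_s}$ with $\polarset{(\slift{f_s})}$, locate a contact point $(p, f_s^{1/s}(p))$ of $\slift{f_s}$ dual to $(z,\tau)$, read off a subgradient $z_s\in\partial\psi(p)$, and finish with Fenchel's equality. The only differences are presentational: you justify the existence and non-degeneracy of the contact point in more detail than the paper does, and you extend the subgradient inequality from $\supp f_s$ to $\dom\psi$ by a convexity/segment argument where the paper instead argues that $-f_s^{1/s}$ coincides with $-1+\psi/s$ near $p$.
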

\vskip 2mm
\noindent
\begin{proof}
Let $\parenth{y, f_s^{1/s}(y)}$ be a dual point to the convex set 
$\slift{\slogleg f_s}$ at 
$\parenth{z, \parenth{\slogleg{f_s}}^{1/s} \parenth{{z}}},$ that is
\begin{equation}
\label{eq:s_dual_point_rep1}
\iprod{\parenth{y, f_s^{1/s}(y)}}{\parenth{\tilde{z}, \parenth{\slogleg{f_s}}^{1/s} \parenth{{\tilde{z}}}}} \leq 
\iprod{\parenth{y, f_s^{1/s}(y)}}{\parenth{z, \parenth{\slogleg{f_s}}^{1/s} \parenth{z}}} = 1
\end{equation}
for all $\tilde{z} \in \supp \slogleg f_s$ and 
\begin{equation}
\label{eq:s_dual_point_rep2}
\iprod{\parenth{\tilde{y}, f_s^{1/s}(\tilde{y})}}
{\parenth{{z}, \parenth{\slogleg{f_s}}^{1/s} \parenth{z}}} 
\leq 
\iprod{\parenth{y, f_s^{1/s}(y)}}{\parenth{{z}, \parenth{\slogleg{f_s}}^{1/s} \parenth{{z}}}} = 1
\end{equation} 
for all $\tilde{y} \in \supp f_s$.
\par
\noindent
Since $z$ 
is in the interior of $\supp \slogleg f_s, $ inequality \eqref{eq:s_dual_point_rep1} implies that
$f_s(y) > 0$. 
Note that $y$ might belong to the boundary of the support of $f_s$.
Consider the convex function
\[
\phi(x) = \begin{cases}
-  f_s^{1/s}(x), & x \in \closure \supp f_s \\
+\infty, &  x \notin \closure \supp f_s.
\end{cases}
\]
Using inequality \eqref{eq:s_dual_point_rep2} in Lemma \ref{lem:geom_meaning_of_subdif}, we conclude that
\[
\parenth{z, \parenth{\slogleg{f_s}}^{1/s} \parenth{z}} = \alpha 
\parenth{\frac{z_s}{s},1},
\]
where $\alpha > 0$ and $\frac{z_s}{s}$ belongs to the subdifferential of the convex function 
$-f_{s}^{1/s}$ at $y$.
Since $f_s^{1/s}(y) > 0$ and $f_{s}$ is lower semi-continuous, 
there is an open  neighborhood $U$ of $y$ such that for all 
$\tilde{y}$ that are in $U$ and in  the boundary of 
$\closure \supp f_{s}$ we have  $f_{s}(\tilde{y})> 0$.
Moreover, since $\psi(y) < s$ and $\psi$ is upper semi-continuous,  one has 
that $\psi(\tilde{y}) = + \infty$ for all 
$\tilde{y} \in \parenth{y+ \epsilon\, \ball{d}_2} \cap \parenth{\R^d \setminus \supp f_{s}}$
for some $\epsilon > 0.$ That is, the function $\phi$ coincides with $ -1 + \frac{\psi(x)}{s}$ on some open neighborhood of $y.$ 
Hence,   $z_s \in \partial \psi(y).$
Therefore, by \eqref{eq:fenchel_moro_lem},
\[ 
\iprod{\parenth{y, f_s^{1/s}(y)}}{\parenth{\frac{z_s}{s},1} } = 
1+ \frac{\iprod{z_s}{y} - \psi(y)}{s} =  1+ \frac{\slogleg[]\psi(z_s)}{s}  < + \infty,
\]
and the identities  \eqref{eq:spolar_sfunc_value2} hold.
\end{proof}
\vskip 3mm
The following  corollary is an immediate consequence of of Lemma \ref{lem:s_dual_point_representation}.
\vskip 2mm
\noindent
\begin{cor}
\label{cor:support_of_slogleg}
Let $f \colon \R^d \to [0, \infty)$ be a proper log-concave function containing the origin  in it's support.
Denote $\slogleg[]\psi = -\log \slogleg[\infty] f$ and $M = \min_{\R^d} \slogleg[]\psi.$ 
Then for any $s$  such that the origin is in the interior of $\supp f_s$ and $s + M > 0,$
one has
\[
\supp \slogleg f_s \subset \frac{1}{M+s} \closure \supp \slogleg[\infty] f.
\] 
\end{cor}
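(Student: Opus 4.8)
The plan is to read the inclusion directly off the representation supplied by Lemma \ref{lem:s_dual_point_representation}, whose hypotheses are exactly those of the corollary, supplemented only by two elementary observations: that $\supp \slogleg[\infty] f$ is convex and that it contains the origin.

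First I would record those two observations. A log-concave function of finite integral is bounded, so $\sup_{\R^d} f < \infty$, whence $\slogleg[\infty] f(0) = \inf_{x}\, 1/f(x) = 1/\sup_{\R^d} f > 0$ and $0 \in \supp \slogleg[\infty] f$; and $\supp\slogleg[\infty] f$ is convex because $\slogleg[\infty] f$ is log-concave. Writing $\psi = -\log f$ and using Fenchel duality, one may also note that $M = \inf_{\R^d}\slogleg[]\psi = -\psi(0) = \log f(0)$, so that the hypothesis $s+M>0$ is precisely the condition $0\in\supp f_s$; but for the proof only the global bound $\slogleg[]\psi \ge M$ on $\R^d$ is used.

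For the main step I would fix $s$ as in the statement, take an arbitrary $z \in \vn \supp \slogleg f_s$, and invoke Lemma \ref{lem:s_dual_point_representation} to produce a point $z_s \in \supp \slogleg[\infty] f$ with $z = z_s/(\slogleg[]\psi(z_s) + s)$. Since $\slogleg[]\psi(z_s) \ge M$ and $M + s > 0$, the scalar $\theta := (M+s)/(\slogleg[]\psi(z_s) + s)$ lies in $(0,1]$, so
\[
(M+s)\,z = \theta\, z_s + (1-\theta)\cdot 0 \in \supp\slogleg[\infty] f ,
\]
the membership following from convexity of $\supp\slogleg[\infty] f$ together with the fact that it contains both $0$ and $z_s$. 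Hence $z \in \tfrac{1}{M+s}\supp\slogleg[\infty] f$, and since $z$ was arbitrary, $\vn \supp\slogleg f_s \subseteq \tfrac{1}{M+s}\supp\slogleg[\infty] f$.

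It remains to pass from the interior to the whole support. Here $\supp f_s = \{f > e^{-s}\}$ has measure at most $e^{s}\int_{\R^d} f < \infty$ and, by hypothesis, non-empty interior, so it is a bounded convex set; consequently $\slift{f_s}$ is a convex body with the origin of $\R^{d+1}$ in its interior, its polar $\parenth{\slift{f_s}}^\polar = \slift{\slogleg f_s}$ (Lemma \ref{lem:dual_s_lifting}) is again a convex body, and $\supp\slogleg f_s$ is full-dimensional. A convex set with non-empty interior lies in the closure of its interior, so
\[
\supp\slogleg f_s \subseteq \closure\!\parenth{\vn\supp\slogleg f_s} \subseteq \closure\!\parenth{\tfrac{1}{M+s}\supp\slogleg[\infty] f} = \tfrac{1}{M+s}\,\closure\supp\slogleg[\infty] f ,
\]
which is the asserted inclusion. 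The only mildly delicate point in the whole argument is exactly this final passage — one must know that $\supp\slogleg f_s$ is full-dimensional, so that the interior-only conclusion of Lemma \ref{lem:s_dual_point_representation} is not lost; everything else is an immediate reading of that lemma.
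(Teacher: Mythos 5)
Your proof is correct and follows exactly the route the paper intends: the paper states this corollary as an immediate consequence of Lemma \ref{lem:s_dual_point_representation}, and your argument (representation $z=z_s/(\slogleg[]\psi(z_s)+s)$, the bound $\slogleg[]\psi(z_s)\ge M$, convexity of $\supp\slogleg[\infty]f$ together with $0\in\supp\slogleg[\infty]f$) is precisely that deduction. The only addition is your careful passage from the interior of $\supp\slogleg f_s$ to the whole support, a detail the paper leaves implicit and which you handle correctly.
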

\vskip 3mm
We are now ready for the proof of Theorem \ref{thm:approx_by_s-conc_dual}.
\vskip 2mm
\noindent
\begin{proof}[Proof of Theorem \ref{thm:approx_by_s-conc_dual}]
We put  $\psi = - \log f$. Then  $\slogleg[] \psi = - \log \slogleg[\infty] f$.
\vskip 2mm
\noindent
Corollary \ref{cor:support_of_slogleg} shows that it suffices to consider points 
 in the interior of the support of  $\slogleg[\infty] f$.
\par
Let therefore $x$ be a point in the interior of the support of  $\slogleg[\infty] f$.
That is, $x$ is  an  interior point of $\dom \slogleg[]\psi$. Let $ \epsilon_1$ be such that 
$x + \epsilon_1 \ball{d}_2 \subset \supp \slogleg[\infty] f$.
As noted above, $\slogleg[\infty] f$ is then Lipschitz on  some open neighborhood in $x + \epsilon_1 \ball{d}_2$ with Lipschitz  constant $C$. 
By Lemma \ref{lem:point_in_supp_L_infty}, we have for all sufficiently large $s$, 
\[
\left\{
 \parenth{\frac{z}{\slogleg[]\psi(z) + s}}
\st 
z \in x + \epsilon_1 \ball{d}_2
\right\} \subset \supp \slogleg f_s.
\]
\par
\noindent
Denote by $\ell$ the line passing through the origin and $x$.  If $x$ and the origin coincide, then $\ell$ is an arbitrary linear one-dimensional subspace of $\R^d$.
By continuity, for any $0 < \epsilon_2 <1$ there exists $s_0$ such that 
\[
x \in  \left\{\frac{s\, z}{\slogleg[]\psi(z) + s} \st z \in \ell, \; \enorm{z-x} < \epsilon_2 \right\}
\]
for all $s > s_0$. It follows that for all sufficiently large $s$ 
there exists $z_s$ satisfying
\begin{enumerate}
\item $\frac{s\, z_s}{\slogleg[]\psi(z_s) + s} = x$
\vskip 2mm
\item $z_s \to x$ as $s \to \infty.$
\end{enumerate}
This, continuity and identity \eqref{eq:spolar_sfunc_value} yield
\[
\lim \limits_{s \to \infty} \slogleg f_s \parenth{ \frac{x}{s}} = 
\lim \limits_{s \to \infty}
\slogleg f_s \parenth{\frac{z_s}{\slogleg[]\psi(z_s) + s}} = 
\lim \limits_{s \to +\infty}
\parenth{{1+ \frac{\slogleg[]\psi(z_s)}{s}}}^{-s} =
\slogleg[\infty] f \parenth{x}.
\]
Since all the  functions considered are continuous at any point of the interior of the support, we conclude that $f_s$ converges locally uniformly to $f$ on $A$.
\vskip 2mm
\noindent
This finishes the proof of Theorem \ref{thm:approx_by_s-conc_dual}.
\end{proof}
\vskip 10mm
\section{A Blaschke--Santal\'o inequality}
\label{sec:santalo_ineq_s_conc}
\vskip 2mm
\noindent
In this section, we prove the following theorem, which  implies
 Theorem \ref{thm:santalo_s_concave_lambda}.  Recall also that 
 \begin{equation}\label{kappa}
 \volbs = \int_{\ball{d}_2}  \left[1 -\enorm{x}^2\right]^{s/2}\,  \di x.
 \end{equation}
\vskip 2mm
\noindent
\begin{thm} 
\label{thm:santalo_s_general}
Let $f \colon \R^d \to [0, \infty)$ be 
$1/s$-concave function with finite integral.
Let $H$ be an affine hyperplane with 
half-spaces $H_{+}$ and $H_{-}$
and such that
$\lambda \int_{\R^d} f = \int_{H_{+}} f$
for some $\lambda \in (0,1)$.
 Then there exists 
$z \in H$ such that for any Borel function 
$g \colon \R^d \to [0, \infty)$ satisfying
\[
\forall x, y \in \R^d, \quad f(x + z) g(y) \leq 
\truncf{1 - {\iprod{x}{y}}}^s
\]
the inequality
\begin{equation}
\label{eq:s_santalo_gen}
\int_{\R^d} f  \int_{\R^d} g   
\leq   \frac{\volbs^2}{4 \lambda (1-\lambda)}.
\end{equation}
holds.
\end{thm}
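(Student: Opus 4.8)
\noindent\emph{Proof strategy.} I would follow the inductive scheme of Lehec's direct proof of the functional Blaschke--Santal\'o inequality \cite{lehec2009direct}, replacing the log-concave duality kernel $e^{-\iprod xy}$ by the $s$-concave kernel $\truncf{1-\iprod xy}^{s}$ throughout. After an affine change of coordinates one may assume $H=\{x\st x[1]=0\}$ and $H_{+}=\{x\st x[1]>0\}$, so that the point $z$ we seek lies in the coordinate subspace $\{x[1]=0\}$; one may also assume $\int_{\R^{d}}f>0$, so that $0<\lambda<1$ forces both $\int_{H_{+}}f$ and $\int_{H_{-}}f$ to be positive. For the induction it is harmless to keep $g$ arbitrary, although at the very end it suffices to treat the pointwise largest admissible function $g(y)=\inf_{x}\truncf{1-\iprod xy}^{s}/f(x+z)$, which is $1/s$-concave and whose $s$-lifting, by Lemma \ref{lem:dual_s_lifting}, is the polar of the corresponding shift of $\slift{f}$. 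The argument is by induction on the dimension $d$.

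\noindent\emph{Base case $d=1$.} Now $H=\{0\}$, so $z=0$ is forced and $\lambda=\bigl(\int_{0}^{\infty}f\bigr)\big/\int_{\R}f$. Since $f$ is $1/s$-concave with finite integral, $\supp f$ is a bounded interval, and the claim reduces to the one-dimensional Blaschke--Santal\'o inequality for $1/s$-concave functions with the centre of polarity fixed at $0$ and mass split $\lambda:(1-\lambda)$, i.e. $\int_{\R}f\int_{\R}g\le\volbs^{2}/\bigl(4\lambda(1-\lambda)\bigr)$ with $\volbs=\int_{-1}^{1}\truncf{1-t^{2}}^{s/2}\di t$ in dimension one. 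I would prove this by passing to the body $K=\slift{f}\subset\R^{2}$, which is symmetric about $\R^{1}$: by \eqref{eq:integral_slift_repr} and Lemma \ref{lem:dual_s_lifting} the left-hand side is at most $\smeasure{K}\cdot\smeasure{\polarset K}$ (with equality for the largest admissible $g$), and optimising over the concave profile of $K$ shows the maximum is attained at $K=\ball 2$, i.e. $f=\hat h^{s}$; the identity $\lambda(1-\lambda)=\bigl(\int_{0}^{\infty}f\bigr)\bigl(\int_{-\infty}^{0}f\bigr)\big/\bigl(\int_{\R}f\bigr)^{2}$ explains both the factor $4\lambda(1-\lambda)$ and why the extremal split is $\lambda=\tfrac12$.

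\noindent\emph{Inductive step.} Let $d\ge 2$. Foliate $\R^{d}$ by the lines parallel to $e_{d}$ (a direction lying in $H$) and pass to the marginals $\bar f(\bar x)=\int_{\R}f(\bar x,t)\di t$ and $\bar g(\bar y)=\int_{\R}g(\bar y,t)\di t$ on $\R^{d-1}$. Three facts structure the step: (i) $\bar f$ is $1/(s+1)$-concave by the Borell/Brascamp--Lieb marginal inequality; (ii) integrating out $x[d]$ does not affect the half-space $\{x[1]>0\}$, so the split ratio is unchanged, $\int_{\bar H_{+}}\bar f=\lambda\int_{\R^{d-1}}\bar f$ with $\bar H=\{\bar x[1]=0\}$; and (iii) the extremal constant factors, $\volbs=\bigl(\int_{-1}^{1}\truncf{1-t^{2}}^{s/2}\di t\bigr)\cdot B$ where $B=\int_{\ball{d-1}}\truncf{1-\abs x^{2}}^{(s+1)/2}\di x$ is the $1/(s+1)$-analogue of $\volbs$ in dimension $d-1$, reflecting that the marginal of $\hat h^{s}$ over one coordinate is a scalar multiple of $\hat h^{s+1}$. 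By the inductive hypothesis in dimension $d-1$ with exponent $s+1$ and the same $\lambda$, there is $\bar z\in\bar H$ such that $\int\bar f\int\bar g'\le B^{2}/\bigl(4\lambda(1-\lambda)\bigr)$ for every $\bar g'$ admissible for $\bar f(\cdot+\bar z)$ in the $(s+1)$-sense. One then sets $z=(\bar z,\zeta)$ with $\zeta\in\R$ to be chosen, so that $z\in H$. To close the induction one must extract, from $g$, such a $\bar g'$: fixing $\bar x,\bar y$ and writing $\gamma=1-\iprod{\bar x}{\bar y}$, the $e_{d}$-fibres $\xi\mapsto f(\bar x+\bar z,\xi+\zeta)$ and $\eta\mapsto g(\bar y,\eta)$ obey, after rescaling by $\sqrt{\gamma}$ when $\gamma>0$, a one-dimensional relation with kernel $\truncf{1-\xi\eta}^{s}$; feeding the base-case estimate into this fibrewise and using that $\int_{\R^{d}}f\int_{\R^{d}}g=\int_{\R^{d-1}}\bar f\int_{\R^{d-1}}\bar g$ by Fubini yields the desired bound, \emph{provided} $\zeta$ is chosen so that the resulting factors, which depend on the induced mass splits of the $e_{d}$-fibres of $f$, assemble correctly; in the symmetric situation one takes $\zeta$ to be the median height, making every such split equal to $\tfrac12$.

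\noindent\emph{Main obstacle.} All of the above except the last clause is bookkeeping --- Fubini, the Borell/Brascamp--Lieb marginal inequality, and, for the base case, the elementary identity $(1-\xi^{2})(1-\eta^{2})=(1-\xi\eta)^{2}-(\xi-\eta)^{2}$, which shows $\hat h^{s}$ is self-$s$-polar. The genuinely delicate point is the fibrewise reduction in the inductive step: each fibre contributes a factor $1/\bigl(4\mu(\bar x)(1-\mu(\bar x))\bigr)\ge 1$ depending on the split $\mu(\bar x)=\bigl(\int_{t>\zeta}f(\bar x+\bar z,t)\di t\bigr)/\bar f(\bar x+\bar z)$ of that fibre, and one must pick the last free coordinate $\zeta$ of $z$ so that these factors combine into the single clean admissibility relation needed to invoke the inductive hypothesis (and so that the fibres with $\gamma\le 0$ are forced to contribute nothing). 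This is the $s$-concave analogue of the key step in Lehec's argument, and I expect it, as there, to be handled by exploiting the freedom to move the centre of polarity inside $H$ together with a continuity/compactness argument producing a workable $\zeta$.
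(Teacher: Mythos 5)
Your proposal correctly identifies the overall Lehec-style scheme (induction on dimension, Borell--Brascamp--Lieb for the $1/(s+1)$-concavity of the marginal, the product of constants $\sthing\kappa_2\cdot\sthing[s+1]\kappa_d=\sthing\kappa_{d+1}$), but both of the genuinely hard steps are missing or set up in a way that cannot work. The inductive step as you describe it does not close, and the obstacle you flag at the end is not a removable technicality but a structural failure of the chosen disintegration. You integrate $f$ and $g$ over \emph{full} lines parallel to a direction $e_d$ lying \emph{inside} $H$. The relation $f(x+z)g(y)\le\truncf{1-\iprod{x}{y}}^s$ only controls the product of fibre integrals over matching half-lines ($\xi,\eta>0$ or $\xi,\eta<0$); on the cross quadrants $\xi\eta<0$ the kernel $\truncf{\gamma-\xi\eta}^s$ exceeds $\gamma^s$ and yields no bound, which is exactly why each fibre pair produces the factor $1/\bigl(4\mu(\bar x)(1-\mu(\bar x))\bigr)\ge 1$ you mention. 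These factors vary with $\bar x$, there is a continuum of them, and you have a single scalar $\zeta$ to adjust, so no choice of $\zeta$ makes them all equal to $1$; hence the full-line marginals $\bar f,\bar g$ are in general \emph{not} an admissible pair for the kernel $\truncf{1-\iprod{\bar y_1}{\bar y_2}}^{s+1}$ and the inductive hypothesis cannot be invoked. The paper avoids this entirely: it chooses $z$ explicitly as the intersection of $H$ with the line through the partial barycenters $b_\pm=\int_{H_\pm}xf(x)\di x$, disintegrates over \emph{half}-lines in the transversal direction $v=b_+/\iprod{b_+}{e_d}$ (so $\int_H F_+=\int_{H_+}f=\lambda\int_{\R^d} f$, only one quadrant ever appears, and one gets the clean bound $F_+(y_1)G_+(y_2)\le(\sthing\kappa_2/2)^2\truncf{1-\iprod{y_1}{y_2}}^{s+1}$ with no split factor), and uses that this choice places the barycenter of $F_+$ at the origin, so that by Theorem \ref{thm:Alexandrov_s-conc} the Santal\'o point of $F_+$ may be taken at the origin. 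The factor $1/(4\lambda(1-\lambda))$ then appears exactly once, when the $H_+$ and $H_-$ contributions are summed at the end.

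The base case is also not proved. The real analytic content of the whole theorem is the one-dimensional half-line lemma (Lemma \ref{lem:s_santalo_onedim}): if $\phi_1,\phi_2\colon[0,\infty)\to[0,\infty)$ are \emph{Borel} and satisfy $\phi_1(t_1)\phi_2(t_2)\le\truncf{1-t_1t_2}^s$, then $\int\phi_1\int\phi_2\le(\sthing\kappa_2/2)^2$. Your replacement --- ``the left-hand side is at most $\smeasure{K}\cdot\smeasure{\polarset{K}}$ \dots and optimising over the concave profile of $K$ shows the maximum is attained at $K=\ball{2}$'' --- is an assertion, not an argument, and more importantly it presupposes $1/s$-concavity of both factors, whereas the fibrewise application inside the induction requires the lemma for arbitrary Borel fibres of $g$. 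The paper proves the half-line lemma via the $s$-level transform $\alpha\mapsto se^{s\alpha}\vol{1}\{\tau\st(\phi(e^\tau)e^\tau)^{1/s}\ge e^\alpha\}$, which converts the hypothesis into a Pr\'ekopa--Leindler condition (using one-dimensional Brunn--Minkowski for the level sets), and then obtains the whole-line $\lambda$-version --- your base case --- by applying the half-line lemma twice and summing $1/\lambda+1/(1-\lambda)$. Both of these ingredients would need to be supplied before your outline becomes a proof.
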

\vskip 3mm
\noindent
The idea of our  proof mostly follows Lehec's arguments \cite{lehec2009direct} -- namely, we prove it by induction on the dimension. 
However, in our setting the one dimensional case requires a more subtle analysis of the Lebesgue level sets of the functions  than in the case of log-concave functions.
\vskip 5mm
\subsection{The one-dimensional case}
The following lemma is a particular case of Proposition 1 of \cite{fradelizi2007some}.
\begin{lem}\label{lem:s_santalo_onedim}
Let $\phi_1 \colon [0, \infty) \to [0, \infty)$ and 
$\phi_2 \colon [0, \infty) \to [0, \infty)$ be two Borel functions  satisfying the duality relation
\begin{equation}
\label{eq:onedim_santalo_lemma}
\text{for all} \;\; t_1, t_2 \in [0, \infty), \quad 
\phi_1( t_1) \phi_2( t_2) \leq 
\truncf{1 - t_1 t_2}^s .
\end{equation}
Then
\begin{equation}
\label{eq:s_santalo_lemma_onedim}
\int_{[0, \infty)} \phi_1 \int_{[0, \infty)} \phi_2 
\leq   \left(\frac{\sthing\kappa_{2}}{2}\right)^2.
\end{equation}
\end{lem}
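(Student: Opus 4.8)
The plan is to convert the ``hyperbolic'' kernel $\truncf{1-t_1t_2}^{s}$ into a genuine log-concave kernel on the whole line via the substitution $t=e^{-x}$, and then to apply the one–dimensional Pr\'ekopa--Leindler inequality. For $i=1,2$ put $g_i(x)=\phi_i(e^{-x})e^{-x}$ for $x\in\R$. Then $g_i$ is Borel, the change of variable $t=e^{-x}$ gives $\int_{\R}g_i=\int_{[0,\infty)}\phi_i$, and the hypothesis \eqref{eq:onedim_santalo_lemma} rewrites as
\[
g_1(x_1)\,g_2(x_2)\ \le\ W(x_1+x_2),\qquad W(r):=\truncf{1-e^{-r}}^{s}\,e^{-r},
\]
for all $x_1,x_2\in\R$: if $g_1(x_1)g_2(x_2)=0$ this is clear, and if both factors are positive then \eqref{eq:onedim_santalo_lemma} forces $e^{-x_1-x_2}<1$, hence $x_1+x_2>0$, and multiplying \eqref{eq:onedim_santalo_lemma} by $e^{-x_1-x_2}$ yields exactly the stated inequality.

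Next I would check that $W$ is log-concave on $\R$. It is supported on the convex set $(0,\infty)$, and there $(\log W)'(r)=\frac{s}{e^{r}-1}-1$ is strictly decreasing, so $\log W$ is concave on $(0,\infty)$; hence $\sqrt{W}$, and therefore $h(m):=\sqrt{W(2m)}$, is log-concave on $\R$. The displayed inequality now reads $\sqrt{g_1(x_1)g_2(x_2)}\le h\!\parenth{\tfrac{x_1+x_2}{2}}$ for all $x_1,x_2$, so the Pr\'ekopa--Leindler inequality in dimension one with parameter $\tfrac12$ gives $\int_{\R}h\ge\parenth{\int_{\R}g_1}^{1/2}\parenth{\int_{\R}g_2}^{1/2}$, i.e.
\[
\int_{[0,\infty)}\phi_1\ \int_{[0,\infty)}\phi_2\ =\ \int_{\R}g_1\ \int_{\R}g_2\ \le\ \parenth{\int_{\R}h}^{2}.
\]
Finally, since $W\equiv0$ on $(-\infty,0]$, one has $\int_{\R}h=\tfrac12\int_{\R}\sqrt{W(r)}\,\di r=\tfrac12\int_{0}^{\infty}\truncf{1-e^{-r}}^{s/2}e^{-r/2}\,\di r$, and the substitution $w=e^{-r/2}$ evaluates this to $\int_{0}^{1}(1-w^2)^{s/2}\,\di w$. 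By \eqref{kappa} with $d=1$, $\sthing\kappa_{2}=\int_{\ball{1}_2}(1-\enorm{x}^2)^{s/2}\di x=2\int_{0}^{1}(1-w^2)^{s/2}\di w$, so $\int_{\R}h=\sthing\kappa_{2}/2$ and \eqref{eq:s_santalo_lemma_onedim} follows.

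I do not anticipate a serious obstacle: the argument works verbatim for non-integer $s$ and for merely Borel $\phi_i$, and the only delicate point is the log-concavity of $W$ across the corner at $r=0$, which is handled by noting that $\{W>0\}$ is a half-line. For the record, an alternative route matching the paper's emphasis on Lebesgue level sets is to first replace each $\phi_i$ by its non-increasing right-continuous majorant (which only increases the integrals and preserves \eqref{eq:onedim_santalo_lemma}), pass to the super-level-set functions $\ell_i(r)=\lambda_1(\{\phi_i>r\})$, which satisfy $\ell_1(r_1)\ell_2(r_2)\le1-(r_1r_2)^{1/s}$, and invert to obtain non-increasing $q_i$ with $q_1(r_1)q_2(r_2)+r_1r_2\le1$ and $\int_{[0,\infty)}\phi_i=\int_{0}^{1}q_i^{s}$; the same $t=e^{-x}$ change of variable and Pr\'ekopa--Leindler step then conclude. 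The direct argument above bypasses this preprocessing.
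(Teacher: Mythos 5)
Your argument is correct, and while it rests on the same two pillars as the paper's proof --- the exponential change of variables that linearizes the multiplicative coupling $t_1t_2$, followed by the one-dimensional Pr\'ekopa--Leindler inequality --- your execution is noticeably more direct. The paper first passes to the $s$-level transform $\sltrans\Psi_{\phi}$ (Claim \ref{claim:fubini_onedim}, a Fubini identity showing the transform preserves integrals) and then establishes the Pr\'ekopa--Leindler hypothesis at the level of super-level sets via a multiplicative Brunn--Minkowski step (Claim \ref{claim:onedim_level_transform_inclusion}); you instead apply Pr\'ekopa--Leindler directly to $g_i(x)=\phi_i(e^{-x})e^{-x}$ against the majorant $W(x_1+x_2)$ with $W(r)=\truncf{1-e^{-r}}^{s}e^{-r}$, which collapses both claims into a one-line pointwise inequality obtained by multiplying \eqref{eq:onedim_santalo_lemma} by $e^{-(x_1+x_2)}$. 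The endpoint computation $\int_{\R}\sqrt{W(2m)}\,\di m=\int_0^1(1-w^2)^{s/2}\di w=\sthing\kappa_2/2$ checks out against \eqref{kappa} with $d=1$. Two minor remarks: the verification that $W$ is log-concave is superfluous, since Pr\'ekopa--Leindler requires only the pointwise majorization $h\parenth{\tfrac{x_1+x_2}{2}}\ge\sqrt{g_1(x_1)g_2(x_2)}$ for nonnegative measurable functions and no concavity of the middle function (so the ``delicate point'' you flag at the corner $r=0$ is not actually needed); and your closing sketch via level-set majorants is vaguer (the inequality $\ell_1(r_1)\ell_2(r_2)\le 1-(r_1r_2)^{1/s}$ as written is not justified), but since you explicitly bypass it, the main proof stands on its own. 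What the paper's level-transform formulation buys is a reusable device phrased in the language of Lebesgue level sets, which the authors emphasize; what your version buys is brevity and the elimination of two auxiliary claims.
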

\begin{proof}
Define $\phi_3(t) = \truncf{1 - t^2}^s;$
for $i=1,2,3,$ define $g_i (t) = \phi_i(e^t)e^t.$
On the one hand, $\int_{\R} g_i = \int_{[0, + \infty)} \phi_i.$
On the other hand, 
inequality \eqref{eq:onedim_santalo_lemma} takes the following form:
\[
g_1(t_1) g_2(t_2) \leq g_3 \parenth{\frac{t_1 + t_2}{2}}.
\]
The desired inequality follows now from the Pr\'ekopa--Leindler inequality \cite{prekopa1971logarithmic}, see also \cite{Gardner, SchneiderBook}. 
\end{proof}
\vskip 3mm
\noindent
\begin{cor}\label{cor:onedim_satalo_s_lambda}
Let $f \colon \R \to [0, \infty)$ be a  Borel function such that $\int_{\R} f \, \di t < \infty,$ and 
let 
$\int_{[0, \infty)} f  = \lambda \int_{\R} f $  
for some $\lambda \in (0,1)$.
Then for any Borel function
$g \colon \R \to [0, \infty)$ satisfying
\[
\forall t_1, t_2 \in \R, \quad f(t_1) g(t_2) \leq 
\truncf{1 - {\iprod{t_1}{t_2}}}^s
\]
the inequality
\begin{equation}
\label{eq:s_santalo_gen_onedim}
\int_{\R} f  \int_{\R} g   
\leq   \frac{\sthing\kappa^2_2}{{4 \lambda (1-\lambda)}}
\end{equation}
holds. 
\end{cor}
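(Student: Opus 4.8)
The plan is to deduce this corollary from the one-dimensional Lemma \ref{lem:s_santalo_onedim} by the same ``split at the separating hyperplane'' trick that Lehec uses in the log-concave setting; here the hyperplane is the origin, and the balance condition $\int_{[0,\infty)} f = \lambda \int_{\R} f$ already records that the shift has been chosen. First I would dispose of the degenerate case $\int_{\R} f = 0$, in which $\int_{\R} f \int_{\R} g = 0$ and there is nothing to prove; so from now on set $A = \int_{\R} f \in (0,\infty)$. The next step is to split both $f$ and $g$ at $0$: define Borel functions on $[0,\infty)$ by $f_{\pm}(t) = f(\pm t)$ and $g_{\pm}(t) = g(\pm t)$.

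The key point is that the bilinear constraint survives this reflection. For $t_1,t_2 \in [0,\infty)$ one has $(\pm t_1)(\pm t_2) = t_1 t_2$, so the hypothesis $f(t_1)g(t_2) \le \truncf{1 - t_1 t_2}^s$ gives both $f_+(t_1) g_+(t_2) \le \truncf{1 - t_1 t_2}^s$ and $f_-(t_1) g_-(t_2) \le \truncf{1 - t_1 t_2}^s$. Thus both pairs $(f_+, g_+)$ and $(f_-, g_-)$ satisfy the duality relation \eqref{eq:onedim_santalo_lemma}, and applying Lemma \ref{lem:s_santalo_onedim} twice yields
\[
\int_{[0,\infty)} f \cdot \int_{[0,\infty)} g \le \parenth{\tfrac{\sthing\kappa_2}{2}}^2
\qquad\text{and}\qquad
\int_{(-\infty,0]} f \cdot \int_{(-\infty,0]} g \le \parenth{\tfrac{\sthing\kappa_2}{2}}^2 ,
\]
after the trivial identifications $\int_{[0,\infty)} f_+ = \int_{[0,\infty)} f$, $\int_{[0,\infty)} f_- = \int_{(-\infty,0]} f$ (and likewise for $g$, the single point $0$ being negligible).

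It then remains to combine these two bounds with the balance condition by elementary arithmetic. Since $\lambda \in (0,1)$ and $A > 0$, the hypothesis forces $\int_{[0,\infty)} f = \lambda A > 0$ and $\int_{(-\infty,0]} f = (1-\lambda) A > 0$; dividing the two product inequalities by these positive quantities (which en route shows $\int_{\R} g$ is finite) and adding gives
\[
\int_{\R} g \le \frac{(\sthing\kappa_2/2)^2}{A}\parenth{\frac{1}{\lambda} + \frac{1}{1-\lambda}} = \frac{\sthing\kappa^2_2}{4 A\,\lambda(1-\lambda)},
\]
and multiplying by $A$ yields \eqref{eq:s_santalo_gen_onedim}.

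I do not expect a genuine obstacle: the argument is a direct transcription of Lehec's one-dimensional reduction, with the $s$-dependence entirely hidden inside Lemma \ref{lem:s_santalo_onedim}. The only places that want a word of care are making sure one never divides by zero (hence the reduction to $A>0$ together with $\lambda\in(0,1)$), noting that the two product bounds automatically give $\int_{\R} g<\infty$ even though $g$ is only assumed Borel, and checking that reflection at the origin is precisely what pairs the two ``halves'' of $g$ with the matching ``halves'' of $f$ in the constraint $f(t_1)g(t_2)\le\truncf{1-t_1 t_2}^s$ (the mixed-sign cases, where the right-hand side is $\ge 1$, are simply not needed).
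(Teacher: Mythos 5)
Your proof is correct and takes essentially the same route as the paper's: apply Lemma \ref{lem:s_santalo_onedim} once to $(f(t),g(t))$ and once to $(f(-t),g(-t))$, use the balance condition $\int_{[0,\infty)}f=\lambda\int_{\R}f$ to convert the two product bounds, and add them via $\frac{1}{\lambda}+\frac{1}{1-\lambda}=\frac{1}{\lambda(1-\lambda)}$. The only difference is that you make explicit the minor points (degenerate case $\int_{\R}f=0$, positivity of the half-integrals, finiteness of $\int_{\R}g$) that the paper leaves implicit.
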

\begin{proof}
We use Lemma \ref{lem:s_santalo_onedim} twice. 
First with $\phi_1(t) = f(t)$ and 
$\phi_2(t) = g(t)$ and then 
with  $\phi_1(t) = f(-   t)$ and 
$\phi_2(t) = g(-  t)$.
In both cases, the condition 
$$
\phi_1( t_1)\,  \phi(t_2)  \leq \truncf{1 - t_1 t_2}^s
$$ 
for all $t_1, t_2 \in [0, \infty)$ is satisfied. Therefore, 
\[
\left(\frac{\sthing\kappa_2}{2}\right)^2 \geq 
\int_{[0,\infty)} f(t) \di t 
\int_{[0,\infty)}g(t) \di t
\]
and 
\[
\left(\frac{\sthing\kappa_2}{2}\right)^2 \geq 
\int_{(-\infty,0]} f(t) \di t 
\int_{(-\infty,0]} g( t) \di t .
\]
By the assumption $\int_{[0, \infty)} f = \lambda \int_{\R} f$,  we get 
\[
\parenth{\frac{\sthing\kappa_2}{2}}^2 \geq 
 {\lambda} \int_{\R} f( t) \di t 
\int_{[0,\infty)} g( t) \di t
\]
and
\[
\left(\frac{\sthing\kappa_2}{2}\right)^2 \geq 
 \parenth{1 - \lambda} \int_{\R} f( t) \di t 
\int_{(-\infty,0]} g( t) \di t.
\]
Summing these inequalities, we obtain
\[
 \left(\frac{\sthing\kappa_2}{2}\right)^2 \frac{1}{ \lambda(1 - \lambda)} =    \left(\frac{\sthing\kappa_2}{2}\right)^2 \left(\frac{1}{ \lambda} 
  + \frac{1}{(1 - \lambda)}\right) \geq
\int_{\R} f \, \int_{\R} g .
\] 
\end{proof}
\vskip 5mm
\noindent
\subsection{Induction on the dimension}
\vskip 3mm
\noindent
\begin{proof}[Proof of Theorem \ref{thm:santalo_s_general}]
We prove the theorem by induction on the dimension.
The one-dimensional case was proved in 
Corollary \ref{cor:onedim_satalo_s_lambda}.
\par
\noindent
Assume now  that  the theorem is true in dimension $d-1$.
Let $b_{+} = \int_{H_{+}} xf(x) \di x$ and 
$b_{-}=\int_{H_{-}} xf(x) \di x,$ that is, 
$b_{\pm}$ is  a scalar multiple of the  barycenter of the restriction of the measure with density $f$  on $H_{\pm}$, respectively. Since $f$ is not  concentrated on $H$,   the point $b_{+}$ belongs to the interior of $H_{+}$, and similarly for $b_{-}.$ 
Hence the line passing through $b_{+}$ and $b_{-}$ intersects $H$ at one point, which we call $z$. We will show  that $z$ satisfies \eqref{eq:s_santalo_gen_onedim}, for all functions $g$ that satisfy the assumption. 
\par
\noindent
Clearly, replacing $f$ by $\shift{f}{-z}$ and $H$ by $H - z$, we can assume that $z=0$. Let $g$ be such that 
\begin{equation}
 \label{eq:reduality_lambda_santalo}
\forall x,y\in \R^d, \qquad f(x) g(y) \leq \truncf{1- \iprod{x}{y}}^s . 
\end{equation}
Let $e_1, \dots, e_d$ be an orthonormal basis of $\R^d$ such that $H = e_{d}^{\perp}$ and 
$\iprod{b_+}{e_d} > 0$. Let
$v = b_+ / \iprod{b_+}{ e_d}$ and let $A$ be the linear operator on $\R^d$ that maps $e_d$ to $v$ and $e_i$ to itself,  for $i=1\dots d-1$.  Let 
$B=\parenth{ A^{-1} }^t$. Define
\[ F_+ : H \to   \R_+  \hskip 3mm \text{by}   \hskip 3mm y_1 \mapsto \int_{\R_+} f( y_1 + t v) \, \di t  
\] 
and 
\[
G_+: H \to   \R_+  \hskip 3mm \text{by} \hskip 3mm y_2   \mapsto \int_{\R_+} g (B y_2 +t e_d) \, \di t . 
\] 
By Fubini's theorem, and since $A$ has determinant $1$, 
\begin{equation} \label{Fplus}
\int_H F_+  = \int_{H_+} f\circ A=\lambda \int_{\R^d} f.
\end{equation}
\par
\noindent
Also, letting $P$ be the projection with range $H$ and kernel $\R v$, one sees that the barycenter of $F_{+}$ is
\[
\frac{\int_H  x F_+ \di x }{\int_H   F_+ \di x } = 
\frac{\int_{H_+} P(Ax) f(Ax) \, \di x}{\lambda \int_{\R^d} f} =
P \Bigl(\frac{  \int_{H_+} x f(x) \, \di x }
{ \int_{H_{+}} f}\Bigr) = P ( b_+ ) , 
\] 
and this is $0$ by the definition of $P$. Since $\iprod{A x_1}{B x_2}
= \iprod{x_1}{x_2}$ for all $x_1,x_2 \in \R^d$, 
we have $ \iprod{ y_1 + t_1 v}{B y_2 + t_2 e_d } = 
\iprod{y_1}{y_2} + t_1 t_2$ for all $t_1 , t_2 \in \R$ and $y_1, y_2 \in H$. So \eqref{eq:reduality_lambda_santalo} implies
\begin{equation}
  \label{eq:santalo_dim_reduction}
  f ( y_1 + t_1 v ) g ( By_2 + t_2 e_d)  \leq \truncf{1 - t_1 t_2- \iprod{y_1}{y_2} }^s .
  \end{equation}  
Let  $t_1, t_2 > 0$  and assume   that $ \iprod{y_1}{y_2} < 1$.
Set $c = \sqrt{1 - \iprod{y_1}{y_2}}.$ Then, using \eqref{eq:santalo_dim_reduction} with $\tau_i = t_i/c,$ we get
\[
\frac{f(y_1 + {c v \cdot \tau_1 })g(B y_2 + {c e_d \cdot \tau_2 })}{c^{2s}} \leq \truncf{1 - \tau_1 \tau_2}^s
\]
By Lemma \ref{lem:s_santalo_onedim}, we get 
that
\[
\frac{1}{c^{2s}}\int_{[0, \infty)} f(y_1 + {c v \cdot \tau_1) } \di \tau_1  
\int_{[0, \infty)} g(B y_2 + {c e_d \cdot \tau_2 }) \di \tau_2  
\leq  \left(\frac{\sthing\kappa_2}{2}\right)^2.
\]
Returning to $t_1$ and $t_2$ in the integrals, one has
\[
F_{+} (y_1)\,  G_{+}(y_2) \leq \left(\frac{\sthing\kappa_2}{2}\right)^2 (1 - \iprod{y_1}{y_2})^{s+1} 
\]
for all $y_1, y_2$ in $H$ satisfying $\iprod{y_1}{y_2} < 1$.
\par
\noindent
If $\iprod{y_1}{y_2} \geq 1,$ inequality \eqref{eq:santalo_dim_reduction} implies that 
$F_{+} (y_1) G_{+}(y_2)= 0.$ Thus,
\begin{equation}
\label{eq:FG_marginal_santalo}
F_{+} (y_1)\,  G_{+}(y_2) \leq 
\parenth{\frac{\sthing\kappa_2}{2}}^2 \truncf{1 - \iprod{y_1}{y_2}}^{s+1}.
\end{equation}
The function $F_{+} $
is a function with finite integral on the $(d-1)$-dimensional space $H,$ and it is   $\frac{1}{1+s}$-concave  by the 
Borell--Brascamb--Lieb inequality \cite{brascamp1976extensions}. Thus, by induction assumption, there exists $v \in H$ such that
\[
 \int_{H} F_{+}  \int_{H} \slogleg[s+1]\!\parenth{{\shift{F_{+}}{v}}}
  \leq \parenth{\sthing[s+1]\kappa_{d}}^2.
\]
Such  a $v$ can be found in any hyperplane inside $H$ bisecting $F_{+}$.
Since the origin is the barycenter of $F_+$,  we see that the integral 
\[
\int_{H} \slogleg[s+1]\!\!\parenth{ \shift{\slogleg[s+1]F_{+}}{q}}
\] 
attains the minimum at the origin,   applying Theorem \ref{thm:Alexandrov_s-conc}  to $ \slogleg[s+1] F_{+}$.
Using again the induction assumption,  we can assume that $v = 0$ in the previous inequality. 
 Inequality \eqref{eq:FG_marginal_santalo} yields that $G_+$ is pointwise less or equal to 
$ \parenth{\frac{\sthing\kappa_2}{2}}^2 \slogleg[s+1]\!\parenth{F_{+}}.$
Hence, we get
\[
\int_{H} F_{+} \int_{H}{G_{+}} \leq 
{\parenth{\frac{\sthing\kappa_2}{2}}^2}  \, \int_{H} F_{+} \,   \int_{H} \slogleg[s+1]\!\parenth{F_{+}} \leq 
\parenth{\frac{\sthing\kappa_2}{2}}^2 
\parenth{\sthing[s+1]\kappa_{d}}^2. 
\]
However, by direct computation,
\[
 \volbs =\smeasure{\ball{d+1}} =   \pi^{d/2} 
  \frac{\gammaf{ s/2 +1}}{ \gammaf{ s/2 + d/2 + 1}},
\]
where $\gammaf{\cdot}$  is the Euler Gamma function, and thus 
$$
\parenth{\frac{\sthing\kappa_2}{2}}^2 
\parenth{\sthing[s+1]\kappa_{d}}^2= \parenth{\frac{\sthing\kappa_{d+1}}{2}}^2.
$$
Hence, also using (\ref{Fplus}), 
\[
\parenth{\frac{\sthing\kappa_{d+1}}{2}}^2 \geq 
\int_{H} F_{+}  \int_{H}{G_{+}}
= \lambda \int_{\R^d} f \int_{H_{+}} g(Bx) \di x = 
\lambda \int_{\R^d} f \int_{H_{+}} g.
\]
Similarly, 
\[
\parenth{\frac{\sthing\kappa_{d+1}}{2}}^2 \geq 
(1-\lambda) \int_{\R^d} f \int_{H_{-}} g.
\]
Therefore,
\begin{eqnarray*}
\int_{\R^d} f \int_{\R^d} g &=&
\int_{\R^d} f \left( \int_{H_{+}} g \ + \ \int_{H_{-}} g \right) \leq 
\parenth{\frac{\sthing\kappa_{d+1}}{2}}^2
\parenth{\frac{1}{\lambda} + \frac{1}{1 - \lambda}} \\
&= &
\parenth{\frac{\sthing\kappa_{d+1}}{2}}^2 \frac{1}{\lambda(1-\lambda)}.
\end{eqnarray*}
This completes the proof of Theorem \ref{thm:santalo_s_general}.
\end{proof}
\vskip 10mm
\section{Santal\'o \tpdfs-regions}
\label{sec:santalo_func}

In this section, we will define Santal\'o regions for functions and discuss possible approaches to define  Santal\'o functions. 
\par
\noindent
For a convex body $K$,  its \emph{Santal\'o region} $S\parenth{K, t}$  with parameter $t$ was  introduced  and studied in \cite{meyer1998santalo}. It is defined as
\begin{equation}\label{Santalo-body}
S\parenth{K, t} = \left\{ 
x \in K \st \vol{d} K  \cdot \vol{d}\shift{K}{x} \leq t \parenth{\vol{d} \ball{d}}^2
\right\}.
\end{equation}
Note that the Santal\'o region approximates the initial set as $t \to \infty$. More importantly, 
it was shown in \cite[Theorem 10]{meyer1998santalo} that  the {\em affine surface area}  of the initial convex body  can be computed as 
a limit as $t \to \infty$ of  the volume  difference of the convex body and its  Santal\'o regions. 
\par
Affine surface area was first introduced by Blaschke \cite{Blaschke1923} 
for 
dimensions $2$ and $3$ and for smooth enough convex bodies
as an integral over the boundary $\partial K$ of a power of the
Gauss curvature $\kappa_K$, 
$$
as(K) = \int_{\partial K} \kappa_K(x)^\frac{1}{n+1} d\mu(x).
$$
Integration is with respect to  the usual surface area measure $\mu$ on $\partial K$.
It was successively extend within the last decades. Aside from  the afore mentioned successful approach using the 
Santal\'o region, there are other successful approaches via the 
(convex) floating body resp. the illumination body  in   \cite{SchuettWerner1990, Werner1994} or in e.g., 
\cite{Lutwak1991, HanSlomkaWerner},  and they all coincide. Such extensions are desirable as 
the affine surface area is one of the most powerful tools in convex 
and differential geometry. It proved to be fundamental in 
the solution of the affine Plateau problem by Trudinger and Wang \cite{TrudingerWang2005,TrudingerWang2008}, in the theory 
of valuations where the affine and centro-affine surface areas have been  characterized by Ludwig and Reitzner 
\cite{Ludwig-Reitzner} and Haberl and Parapatits \cite{HaberlParapatitis} as unique valuations satisfying certain 
invariance properties.  Affine surface area appears naturally in the approximation of general 
convex bodies by polytopes, e.g.,  \cite{Boeroetzky2000, Reitzner, SchuettWerner2003}.
Furthermore,  there are  connections
to e.g.,  PDEs and ODEs and  concentration of volume (e.g., \cite{FGP2007, LutwakOliker}),  
information theory 
(e.g., \cite{artstein2012functional, CaglarWerner2014, 
PaourisWerner2012, Werner2012}) and in a spherical and hyperbolic setting \cite{BesauWerner2015a, BesauWerner2018}. 
\par
\noindent
It would be extremely interesting to develop  an analogue of  affine surface area in the functional setting.  
There are several approaches already how to define it for log-concave functions \cite{artstein2012functional, CaglarWerner2014, caglar2016functional, LiSchuettWerner},
but those do not all coincide. 
We think that an approach via  a Santal\'o function will help not only to clarify this point, but also 
can reveal new properties and inequalities related to log-concave functions and their integrals. 
Towards this goal, we  define  Santal\'o functions  for $1/s$-concave functions  in the next subsection.
\vskip 2mm
\noindent
First we  define, following  (\ref{Santalo-body}),  the Santal\'o $s$-region for a fixed positive $s$ by
 $\santalosreg{f}{s}{t}$ of a non-negative Borel function $f$ on $\R^d$   by
 \[
 \santalosreg{f}{s}{t} = \left\{ 
 x \in \co \supp f \st 
 \int_{\R^d} f \cdot \int_{\R^d} \slogleg \parenth{\shift{f}{x}} \leq 
 t \parenth{\volbs}^2 
 \right\},
 \]
where $\text{co}\, A$ denotes the convex hull of a set $A$.
In the limit case $ s =\infty$ 
 we define the Santal\'o $\infty$-region by
 \[
 \santalosreg{f}{\infty}{t} = \left\{ 
 x \in \co \supp f \st 
 \int_{\R^d} f \cdot \int_{\R^d} \slogleg[\infty] \parenth{\shift{f}{x}} \leq 
 t  \cdot (2 \pi)^d
 \right\}.
 \]

 \vskip 2mm
 \noindent
 We summarize the properties of the Santal\'o regions in the next lemmas. They 
  follow from Lemmas \ref{lem:dual_s_lifting} and \ref{lem:int_s-polar_via_suppfunc} and Theorems \ref{thm:Alexandrov_log-conc}-\ref{thm:santalo_s_general}.
\vskip 2mm
\noindent
\begin{lem}
Let $s$ be a fixed positive real number.
Let $f$  be a non-negative function on $\R^d$ such that 
$\slogleg \slogleg f$ has positive integral. Then  $\santalosreg{f}{s}{t}$  
\begin{enumerate}
\vskip 2mm
\item  is non-empty if $t \geq 1,$ and has non-empty interior if $t > 1$,
\vskip 2mm
\item  is a convex set if it is non-empty,
\vskip 2mm
\item  is strictly convex if it has non-empty interior,
\vskip 2mm
\item  has $C^{\infty}$-smooth boundary if it has non-empty interior.
\end{enumerate}
\end{lem}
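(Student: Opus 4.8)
The plan is to deduce all four assertions from Theorems~\ref{thm:Alexandrov_s-conc} and~\ref{thm:santalo_s_concave_lambda}, together with Lemmas~\ref{lem:dual_s_lifting} and~\ref{lem:int_s-polar_via_suppfunc}, after one reduction and one sharpening.  \emph{The reduction:} replacing $f$ by $g:=\slogleg\slogleg f$ changes $\santalosreg{f}{s}{t}$ only on the boundary of $\co\supp f$, which will be irrelevant.  Indeed $g$ is upper semi-continuous and $1/s$-concave, by Lemma~\ref{lem:dual_s_lifting} and the bipolar theorem $\slift{g}=\parenth{\slift{f}}^{\polar\polar}$, and $\slogleg\!\parenth{\shift{f}{x}}=\slogleg\!\parenth{\shift{g}{x}}$ for every $x\in\co\supp f$: for such $x$ the point $(x,0)$ is a convex combination of points of $\slift{f}+(x,0)$ and the origin, so $\slift{f}+(x,0)$ and its bipolar have the same polar, and then one applies Lemma~\ref{lem:dual_s_lifting}; meanwhile $\int_{\R^d}f$ is a fixed constant.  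So assume $f$ is itself upper semi-continuous and $1/s$-concave, $K:=\slift{f}$ a $d$-symmetric convex body, and put
\[
\Phi(x)=\int_{\R^d}\slogleg\!\parenth{\shift{f}{x}},\qquad
\Psi(x)=\Phi(x)^{-\frac1{d+s}},\qquad
c_t=\parenth{\frac{t\parenth{\volbs}^2}{\int_{\R^d}f}}^{-\frac1{d+s}}>0 ,
\]
so that $\santalosreg{f}{s}{t}=\{x\in\co\supp f:\Psi(x)\ge c_t\}$.  By Lemmas~\ref{lem:dual_s_lifting} and~\ref{lem:int_s-polar_via_suppfunc}, on $\mathrm{int}\,\co\supp f$ one has
\[
\Phi(x)=\frac{s}{2(d+s)}\int_{S^d}\abs{\iprod{e_{d+1}}{u}}^{s-1}\,\suppfunc{K_x}{u}^{-(d+s)}\di\spherem{u},\qquad K_x:=K-(x,0),
\]
with $\suppfunc{K_x}{u}=\suppfunc{K}{u}-\iprod{(x,0)}{u}$ affine and positive in $x$; hence $\Phi,\Psi\in C^\infty(\mathrm{int}\,\co\supp f)$, $\Phi$ is convex and $\Psi$ concave there by Theorem~\ref{thm:Alexandrov_s-conc}, and $\Phi(x)\to+\infty$, i.e.\ $\Psi(x)\to0$, as $x\to\partial(\co\supp f)$, because some $\suppfunc{K_x}{u}\to0$ and $s>0$.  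In particular, for every $c_t>0$ the set $\santalosreg{f}{s}{t}=\{\Psi\ge c_t\}$ is a compact subset of $\mathrm{int}\,\co\supp f$.

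\emph{Assertion (2)} is then immediate: it is a super-level set of the concave function $\Psi$, intersected with the convex set $\co\supp f$.  \emph{For assertion (1),} apply Theorem~\ref{thm:santalo_s_concave_lambda} with $\lambda=\tfrac12$: it yields a point $z_0$ with $\int_{\R^d}f\cdot\Phi(z_0)\le\parenth{\volbs}^2$ --- in particular $\Phi(z_0)<\infty$, so $z_0\in\mathrm{int}\,\co\supp f$ --- i.e.\ $\Psi(z_0)\ge c_1$.  Since $c_t=t^{-\frac1{d+s}}c_1$, for $t\ge1$ we get $\Psi(z_0)\ge c_1\ge c_t$, so $z_0\in\santalosreg{f}{s}{t}$; and for $t>1$ the inequality $\Psi(z_0)\ge c_1>c_t$ is strict, so by continuity a whole neighbourhood of $z_0$ lies in the region, which therefore has non-empty interior.

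\emph{Assertions (3) and (4)} are the substantive ones, and both follow once one shows that $\Psi$ is \emph{strictly} concave on $\mathrm{int}\,\co\supp f$.  Revisiting the computation in the proof of Theorem~\ref{thm:Alexandrov_s-conc}: $\Psi$ fails to be strictly concave at $x$ only if the matrix $A_x$ there is singular, but $A_x$ is, up to a positive factor, the covariance matrix of the $\R^d$-valued map $u\mapsto u/\suppfunc{K_x}{u}$ with respect to the probability measure proportional to $\abs{\iprod{e_{d+1}}{u}}^{s-1}\suppfunc{K_x}{u}^{-(d+s)}\di\spherem{u}$.  Its kernel being nontrivial would force $\iprod{u}{w}=\alpha\,\suppfunc{K_x}{u}$ for some fixed $w\neq0$ and $\alpha\in\R$ and $\sigma$-almost every $u$ --- the density is positive $\sigma$-a.e.\ for every $s>0$ --- hence for all $u\in S^d$ by continuity and $1$-homogeneity; that would make $K_x$ a single point, contradicting that $K$ is a convex body.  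So $A_x$ is positive definite and $\Psi$ is strictly concave.  Now fix $t>1$.  Then $\santalosreg{f}{s}{t}=\{\Psi\ge c_t\}$ is a convex body with interior $\{\Psi>c_t\}$ and boundary $\{\Psi=c_t\}$.  Strict concavity rules out a line segment inside $\{\Psi=c_t\}$ (its midpoint would have $\Psi$-value $>c_t$), giving~(3).  For~(4), $\Psi\in C^\infty$ near $\{\Psi=c_t\}$, and $\nabla\Psi$ cannot vanish there: a critical point of the strictly concave $\Psi$ is its unique maximiser, at which $\Psi\ge\Psi(z_0)\ge c_1>c_t$.  Thus $c_t$ is a regular value and $\{\Psi=c_t\}$ is a $C^\infty$ hypersurface by the implicit function theorem.

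The only real work is the strictness step: Theorem~\ref{thm:Alexandrov_s-conc} supplies only semi-definiteness of $A_x$, and one must upgrade it to definiteness, being mildly careful that the relevant spherical measure has positive density $\sigma$-a.e.\ for \emph{every} $s>0$ (for $0<s<1$ its density blows up on the equator $\{\iprod{e_{d+1}}{u}=0\}$ but stays positive off that $\sigma$-null set).  The remaining points are bookkeeping: the reduction to the $1/s$-concave case, the precise description of the open set on which $\Phi$ is finite and convex, and the fact that for $t>1$ the region lies compactly inside it, so that the smoothness and strict-concavity conclusions genuinely apply to $\santalosreg{f}{s}{t}$.
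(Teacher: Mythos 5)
The paper offers no actual proof of this lemma --- it only asserts that the properties ``follow from'' Lemmas \ref{lem:dual_s_lifting} and \ref{lem:int_s-polar_via_suppfunc} and Theorems \ref{thm:Alexandrov_s-conc}--\ref{thm:santalo_s_general} --- so there is nothing to compare against except that pointer, and your argument is precisely the correct elaboration of it. Your proof is sound: the reduction to $g=\slogleg\slogleg f$ via the bipolar of the lifting, the identification of $\santalosreg{f}{s}{t}$ with a super-level set of $\Psi=\Phi^{-1/(d+s)}$, the use of Theorem \ref{thm:santalo_s_concave_lambda} with $\lambda=\tfrac12$ for (1), and concavity of $\Psi$ for (2) all check out. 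The genuinely new ingredient you supply --- and which the paper's cited results do \emph{not} literally contain, since Theorem \ref{thm:Alexandrov_s-conc} only gives positive \emph{semi}-definiteness of $A_x$ --- is the upgrade to positive definiteness: degeneracy of the covariance matrix would force $\iprod{u}{(w,0)}=\alpha\,\suppfunc{K_x}{u}$ for all $u$, collapsing $K_x$ to a point; this is exactly what is needed for (3) and (4), and your handling of the $\sigma$-null equator for $0<s<1$ is the right care to take.

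Two small tidying remarks. First, you establish (3) and (4) for $t>1$, whereas the lemma asks for them whenever the region has non-empty interior; the fix is immediate (non-empty interior forces $\max\Psi>c_t$, which is all your regular-value argument uses), but it should be said. Second, as stated the lemma's hypothesis ``$\slogleg\slogleg f$ has positive integral'' does not by itself guarantee $\int f<\infty$ or that $\slift{f}^{\polar\polar}$ is a genuine ($d$-symmetric, full-dimensional, bounded) convex body, both of which your argument (and the paper's Lemma \ref{lem:int_s-polar_via_suppfunc}) silently uses; this is a looseness in the paper's statement rather than a gap in your proof, but it is worth flagging that you are working under the intended reading $0<\int\slogleg\slogleg f<\infty$.
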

\vskip 2mm
\noindent
\begin{lem}
Let $f \colon \R^d \to [0, \infty)$ be a proper log-concave function.
If  $\santalosreg{f}{\infty}{t}$ is non-empty, then 
\[
 \santalosreg{f_s}{s}{t}  \to  \santalosreg{f}{\infty}{t} 
\] 
in the Hausdorff metric as $s \to \infty.$
\end{lem}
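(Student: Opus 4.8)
The plan is to establish the Hausdorff convergence $\santalosreg{f_s}{s}{t} \to \santalosreg{f}{\infty}{t}$ by combining two ingredients: the pointwise (in fact locally uniform) convergence of the defining functionals, which follows from Theorem~\ref{thm:approx_by_s-conc_dual} and Corollary~\ref{cor:approx_mahler_int}, together with the convexity and regularity of the regions established in the previous lemma. Recall from Corollary~\ref{cor:approx_mahler_int} that for a proper log-concave $f$ containing the origin in the interior of its support,
\[
\int f_s \cdot \int \slogleg f_s \xrightarrow[s\to\infty]{} (2\pi)^{-d}\, \int f \cdot \int \slogleg[\infty] f
\]
after the correct normalization; more precisely, applying that corollary to the shifted function $\shift{f}{x}$ (which is again proper log-concave with $x$ in the interior of its support, for $x \in \vn \supp f$), one gets
\[
\Phi_s(x) := \frac{s^d \int f_s \cdot \int \slogleg \parenth{\shift{f_s}{x}}}{(\volbs)^2}
\xrightarrow[s\to\infty]{} \frac{\int f \cdot \int \slogleg[\infty]\parenth{\shift{f}{x}}}{(2\pi)^d} =: \Phi_\infty(x).
\]
Here one must check that $(f_s)$ shifted by $x$ agrees with $(\shift{f}{x})_s$ up to the relevant normalization, which is immediate from the definition \eqref{eq:logconv_approx_s_conc}, and that $s^d(\volbs)^2 \to (2\pi)^d$ by the Gamma-function computation already recorded in the proof of Theorem~\ref{thm:santalo_s_general}. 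Thus $\santalosreg{f_s}{s}{t} = \{x \in \co\supp f_s : \Phi_s(x) \le t\}$ and $\santalosreg{f}{\infty}{t} = \{x : \Phi_\infty(x) \le t\}$ are sublevel sets of functions converging pointwise on $\vn\supp f$.

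\textbf{The key steps}, in order, are as follows. First, I would upgrade the pointwise convergence $\Phi_s \to \Phi_\infty$ to \emph{locally uniform} convergence on $\vn\supp f$: since each $\Phi_s$ is convex on $\vn\supp f$ (by Theorem~\ref{thm:Alexandrov_s-conc}\eqref{ass:1_thm:Alexandrov_s-conc}, up to the affine normalization, noting that $z \mapsto \int \slogleg(\shift{f_s}{z})$ is convex and $\int f_s$ is a constant) and the limit $\Phi_\infty$ is convex and finite on the open set $\vn\supp f$, pointwise convergence of convex functions on a dense subset of an open convex set automatically improves to locally uniform convergence on that set. Second, from locally uniform convergence of convex functions, Hausdorff convergence of the sublevel sets $\{\Phi_s \le t\}$ to $\{\Phi_\infty \le t\}$ follows by a standard argument, \emph{provided} the limit sublevel set $\{\Phi_\infty \le t\}$ has nonempty interior and is bounded — both of which we may assume: nonempty interior holds because $\santalosreg{f}{\infty}{t}$ is assumed nonempty and, by the preceding lemma applied with the strict inequality (or by noting that $\Phi_\infty$ attains its minimum value $\le t$ at the Santaló point and is strictly convex near there once $t$ exceeds the minimum — the boundary case $t$ equal to the minimum needs separate easy treatment), it has nonempty interior; boundedness follows since $\santalosreg{f}{\infty}{t} \subseteq \co \supp f$ and $\supp f$ is bounded when $f$ is proper with finite integral and — wait, $\supp f$ need not be bounded. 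This is the point requiring care: I would instead use that $\Phi_\infty(x) \to \infty$ as $x$ approaches $\partial \supp f$ or as $|x|\to\infty$ within $\supp f$, which holds because $\int \slogleg[\infty]\parenth{\shift{f}{x}}$ blows up there (the polar function's support shrinks), forcing $\{\Phi_\infty \le t\}$ to be a compact subset of $\vn\supp f$. Third, I would control the domains: one must rule out that $\santalosreg{f_s}{s}{t}$ has stray points near $\partial\co\supp f_s$ that escape to the boundary; since $\co\supp f_s \to \co\supp f$ and $\Phi_s \to \infty$ uniformly near the boundary (by an equicontinuity/uniform-blowup argument analogous to the $s=\infty$ case, using Corollary~\ref{cor:support_of_slogleg} to bound $\supp\slogleg f_s$), the regions $\santalosreg{f_s}{s}{t}$ are eventually contained in a fixed compact subset of $\vn\supp f$, and on that compact set locally uniform convergence is genuine uniform convergence, closing the argument.

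\textbf{The main obstacle} I anticipate is precisely the uniform control of $\Phi_s$ near the boundary of the support — ensuring no part of $\santalosreg{f_s}{s}{t}$ leaks out toward $\partial\supp f$ as $s\to\infty$, and conversely that $\santalosreg{f_s}{s}{t}$ does not shrink away from part of $\santalosreg{f}{\infty}{t}$. The interior convergence is soft (convexity does all the work), but the boundary estimate requires a quantitative lower bound on $\int \slogleg\parenth{\shift{f_s}{x}}$ that is uniform in $s$ for $x$ near $\partial \supp f$; I would obtain this from Lemma~\ref{lem:int_s-polar_via_suppfunc}, which expresses this integral as $\frac{s}{2(d+s)}\int_{S^d} \abs{\iprod{e_{d+1}}{u}}^{s-1} (h_{\shift{K_s}{x}}(u))^{-(d+s)}\,\di\sigma(u)$ with $K_s = \slift{f_s}$, and note that as $x \to \partial\supp f$ the support function $h_{\shift{K_s}{x}}(u)$ becomes small in some directions, making the integral large; combined with Corollary~\ref{cor:support_of_slogleg}, which gives $\supp\slogleg f_s \subset \frac{1}{M+s}\closure\supp\slogleg[\infty] f$, one gets the needed compactness of the approximating regions. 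Once that uniform blow-up is in hand, the remainder is a routine application of the principle that pointwise-convergent convex functions with compactly-contained sublevel sets have Hausdorff-convergent sublevel sets.
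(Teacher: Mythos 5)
Your proposal is correct and consistent with the paper, which gives no written proof of this lemma at all — it merely asserts that the lemma "follows from" Lemmas \ref{lem:dual_s_lifting}--\ref{lem:int_s-polar_via_suppfunc} and Theorems \ref{thm:Alexandrov_log-conc}--\ref{thm:santalo_s_general} — and your argument (pointwise convergence of the normalized functionals from Theorem \ref{thm:approx_by_s-conc_dual}/Corollary \ref{cor:approx_mahler_int} together with $s^d\parenth{\volbs}^2\to(2\pi)^d$, upgraded to locally uniform convergence by convexity, then Hausdorff convergence of the compact convex sublevel sets with the boundary leakage controlled via Corollary \ref{cor:support_of_slogleg}) is the natural way to make that precise. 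The one point worth recording is the degenerate case where $t$ equals the minimum of the limiting functional, so that $\santalosreg{f}{\infty}{t}$ is a single point while the approximating regions may be empty for every $s$; you flag this as needing "separate easy treatment," but in fact the statement requires excluding it (or assuming nonempty interior) to hold literally, an edge case the paper also glosses over.
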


\vskip 5mm

\subsection{Marginals of convex sets}

\par
Let $s$ be the reciprocal of a positive integer. 
Then, as we already discussed,  any $1/s$-concave function on $\R^d$ is the marginal of a convex set in $\R^{d+s}$. 
In particular, one can lift  a $1/s$-concave function $f \colon \R^d \to [0, \infty)$ into $\R^{d+ s}$ as follows \cite{artstein2004santalo}, 
\begin{equation}\label{def.Ksf}
{K}_s(f) = \left\{
(x, y) \in \R^{d} \times \R^{s} \st x \in \closure \supp f,\, \enorm{y} \leq \parenth{\frac{f(x)}{\vol{s} \ball{s}}}^{\!1/s}
\right\}.
\end{equation}
Clearly,
\[
\vol{d +s} {K}_s(f) =  \int_{\supp f} 
\vol{s} \ball{s} \parenth{ \frac{f(x)}{\vol{s} \ball{s}}} \di x =
\int_{\R^d} f. 
\]
From \cite[Lemma 3.1]{artstein2004santalo} follows that for any $z$ in the interior of the support of $f,$ one has
\[
\vol{d+s} \shift{K_s(f)}{z} = 
\parenth{\vol{s} \ball{s}}^2 
\int_{\R^d}  \slogleg\! \parenth{\shift{f}{z}},
\]
and therefore, 
\[
 \int_{\R^d} f  \,  \int_{\R^d} \slogleg \! \parenth{\shift{f}{z}}  =
  \frac{\vol{d+s} K_s(f) \,  \vol{d+s} \shift{K_s(f)}{z}}
{\parenth{\vol{s} \ball{s}}^2}.
\]
On the other hand, $\vol{d+s} \ball{d+s}= \volbs \vol{s} \ball{s}$.
\vskip 2mm
\noindent
We then  define the  {\em Santal\'o $m$-function}  $S_{m}(f,s,t)$ of a $1/s$-concave function $f$ with integer $s$ to be such that
 \begin{equation}\label{Santalofunction-sconcave}
K_s \! \parenth{S_m(f,s,t)}= 
 S \! \parenth{K_s(f),  t}.
\end{equation}
\par
\noindent
It follows that the Santal\'o m-function $S_{m}(f,s,t)$ of a $1/s$-concave function $f$ is $1/s$-concave. 
Moreover,  the following identity holds.
\vskip 2mm
\noindent
\begin{prp} Let $s \in \N,$ and let $f \colon  \R^d \to [0, \infty)$ 
be a  $1/s$-concave function of finite integral. Then
\begin{eqnarray*}
&&\hskip -10mm \lim_{t \to \infty}  
\frac{  \int_{\R^d} f  -  \int_{\R^d} S_m(f,s,t) }
{ t^{-\frac{2}{ d+ {s} +1 }} }= \\
&& \hskip 20mm \frac{s}{2} \,
\parenth{  \frac{\vol{s} \ball{s}}{\vol{d+s} \ball{d+s}} \int_{\R^d} f}^\frac{2}{d+s+1}  \, 
 \int_{\R^d}  \left| \det \parenth{\text{Hess} \left(f^\frac{1}{s}}
 \right) \right|^\frac{1}{d+s+1}
f^{\frac{(s-1)(d+s)}{s(d+s+1)} }, 
\end{eqnarray*}
where $\text{Hess} \left(f^\frac{1}{s}\right)$ is the Hessian of $ f^\frac{1}{s}.$
\end{prp}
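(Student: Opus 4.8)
The plan is to transfer everything, via the lift $K_s(\cdot)$ of \eqref{def.Ksf}, to convex geometry in $\R^{d+s}$: reduce the left-hand side to the volume deficit of a Santal\'o region, apply the Meyer--Werner asymptotic expansion of that deficit in terms of affine surface area, and then compute the affine surface area of $K_s(f)$ — which is a body of revolution over $\closure\supp f$ — directly in terms of $f^{1/s}$. The identity \eqref{Santalofunction-sconcave} defining $S_m(f,s,t)$ is legitimate because $S(K_s(f),t)$ is again a body of revolution in the $\R^s$-directions: it is convex and invariant under the isometries of $\R^{d+s}$ fixing $\R^d$ pointwise, hence meets each fibre $\{x\}\times\R^s$ in a centred Euclidean ball. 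Together with the volume formula $\vol{d+s}K_s(g)=\int_{\R^d}g$ recorded above, this gives
\[
\int_{\R^d}f-\int_{\R^d}S_m(f,s,t)=\vol{d+s}K_s(f)-\vol{d+s}S\!\parenth{K_s(f),t}.
\]

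Next I would invoke \cite[Theorem~10]{meyer1998santalo} with $n:=d+s$ and $K:=K_s(f)$, in the form
\[
\lim_{t\to\infty}t^{2/(n+1)}\bigl(\vol{n}K-\vol{n}S(K,t)\bigr)=\tfrac12\parenth{\tfrac{\vol{n}K}{\vol{n}\ball{n}}}^{2/(n+1)}\operatorname{as}_{n}(K),
\]
where $\operatorname{as}_{n}(K)=\int_{\partial K}\kappa_K^{1/(n+1)}\di\mu$, interpreted in the generalized (Leichtweiss) sense when $K$ is not smooth; the ruled part of $\partial K_s(f)$ lying over $\partial\supp f$, where $\kappa_K=0$, contributes nothing. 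The heart of the argument is then the evaluation of $\operatorname{as}_{d+s}(K_s(f))$. Write $g=(f/\vol{s}\ball{s})^{1/s}$, a non-negative concave function on $\closure\supp f$, so that $K_s(f)=\{(x,y)\in\R^d\times\R^s:\enorm{y}\le g(x)\}$, and parametrize the smooth boundary by $(x,\omega)\mapsto(x,g(x)\omega)$, $x\in\supp f$, $\omega\in S^{s-1}$, with unnormalized outer normal $(-\nabla g(x),\omega)$. Rotational symmetry makes the shape operator block-diagonal: the $d$ ``meridian'' directions carry the shape operator of the graph of $g$ over $\R^d$ (product of principal curvatures $\enorm{\det\text{Hess}\,g}/(1+\enorm{\nabla g}^2)^{(d+2)/2}$), while each of the $s-1$ ``latitudinal'' directions has principal curvature $1/\bigl(g\sqrt{1+\enorm{\nabla g}^2}\bigr)$; hence
\[
\kappa_{K_s(f)}(x,\omega)=\frac{\enorm{\det\text{Hess}\,g(x)}}{g(x)^{s-1}\,(1+\enorm{\nabla g(x)}^2)^{(d+s+1)/2}},\qquad \di\mu=g(x)^{s-1}\sqrt{1+\enorm{\nabla g(x)}^2}\,\di x\,\di\omega,
\]
the second identity being the Gram determinant of the parametrization. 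Since the integrand of $\operatorname{as}_{d+s}$ is $\omega$-independent, integration over $S^{s-1}$ produces the factor $s\,\vol{s}\ball{s}$ (the surface area of $S^{s-1}$), and after cancellation
\[
\operatorname{as}_{d+s}(K_s(f))=s\,\vol{s}\ball{s}\int_{\R^d}\enorm{\det\text{Hess}\,g}^{1/(d+s+1)}\,g^{(d+s)(s-1)/(d+s+1)}\,\di x.
\]

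To finish, I would substitute $g=(\vol{s}\ball{s})^{-1/s}f^{1/s}$ and use the identity $d+(d+s)(s-1)=s(d+s-1)$: the stray powers of $\vol{s}\ball{s}$ collapse into exactly $(\vol{s}\ball{s})^{2/(d+s+1)}$, and the exponent of $f$ becomes $(s-1)(d+s)/(s(d+s+1))$. Feeding this together with $\vol{d+s}K_s(f)=\int_{\R^d}f$ into the Meyer--Werner limit yields precisely the asserted formula, with prefactor $\tfrac{s}{2}\bigl(\tfrac{\vol{s}\ball{s}}{\vol{d+s}\ball{d+s}}\int_{\R^d}f\bigr)^{2/(d+s+1)}$.

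The main obstacle is the affine surface area computation: correctly diagonalizing the second fundamental form of the revolution body $K_s(f)$ — cleanly isolating the Monge--Amp\`ere block from the $s-1$ rotational curvatures — and then keeping precise track of the exponents of $1+\enorm{\nabla g}^2$, of $g$, and of $\vol{s}\ball{s}$ so that they assemble as above; the ``odd-looking'' power $(s-1)(d+s)/(s(d+s+1))$ is exactly the residue of that bookkeeping. A secondary, routine point is justifying that both $\operatorname{as}_{d+s}$ and the Meyer--Werner limit are insensitive to the non-smooth, ruled portion of $\partial K_s(f)$ over $\partial\supp f$, which follows from the general theory of generalized affine surface areas.
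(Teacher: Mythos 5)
Your argument is correct and is essentially the proof the paper intends: the paper's own proof simply cites Theorem 10 of Meyer--Werner together with Proposition 6 of Artstein-Avidan--Klartag--Sch\"utt--Werner, the latter being precisely the affine surface area formula for the body of revolution $K_s(f)$ that you derive by hand. Your curvature and exponent bookkeeping (the powers of $g$, of $1+\enorm{\nabla g}^2$, and of $\vol{s}\ball{s}$, collapsing to $(\vol{s}\ball{s})^{2/(d+s+1)}$) checks out and reproduces the stated constant.
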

\vskip 2mm
\noindent
\begin{proof}
The proof follows immediately from Theorem 10 of  \cite{meyer1998santalo} and  Proposition 6 of \cite{artstein2012functional}.
\grishasays{Theorem 10 of \cite{meyer1998santalo} that the limit is 
\[
\frac{1}{2} \,
\parenth{  \frac{\int_{\R^d} f}{\vol{d+s} \ball{d+s} } }^\frac{2}{d+s+1}
as (K_s (f)).
\]
Here $as(K_s (f))$ stands for the affine surface area measure of $K_s$ defined as above.
In the notation of \cite{artstein2012functional}, it corresponds to
\[
as_1 \parenth{K_s^\ast \!\parenth{\frac{f}{\vol{s} \ball{s}}}},
\]
where  $K_s^\ast$ is the lifting used in \cite{artstein2012functional}.
By Proposition 6 of \cite{artstein2012functional} and homogeneity, we have
\[
as_1 \parenth{K_s^\ast \!\parenth{\frac{f}{\vol{s} \ball{s}}}} = 
\]
\[
c_s 
\parenth{\vol{s} \ball{s}}^{\frac{d}{s(d+s+1)} + \frac{(s-1)(d+s)}{s(d+s+1)}}
 \int_{\R^d}  \left| \det \parenth{\text{Hess} \left(f^\frac{1}{s}}
 \right) \right|^\frac{1}{d+s+1}
f^{\frac{(s-1)(d+s)}{s(d+s+1)} },
\] 
where
$$
c_s= (s-1)  B\!\! \parenth{\frac{s-1}{2}, \frac{1}{2}} \vol{s-1} \ball{s-1}
$$
if $s \neq 1$ and $c_1 = 2.$
However, 
\[
\frac{d}{s(d+s+1)} + \frac{(s-1)(d+s)}{s(d+s+1)} = 
\frac{d + (s-1) d + s(s-1)}{s(d+s+1)} = 
\frac{s d + s(s-1)}{s(d+s+1)} = 
\]
\[
\frac{s(d+s-1)}{s(d+s+1)} = \frac{d+s-1}{d+s+1} = 1 - \frac{2}{d+s+1}.
\] 
Thus the limit is 
\[
\frac{c_s}{2 \vol{s} \ball{s}}\parenth{  \frac{\vol{s} \ball{s}}{\vol{d+s} \ball{d+s}} \int_{\R^d} f}^\frac{2}{d+s+1}  \, 
 \int_{\R^d}  \left| \det \parenth{\text{Hess} \left(f^\frac{1}{s}}
 \right) \right|^\frac{1}{d+s+1}
f^{\frac{(s-1)(d+s)}{s(d+s+1)} }.
\]
Next,
$
\frac{c_s}{2 \vol{s} \ball{s}} = 
\frac{(s-1)}{2} \frac{\Gamma(\frac{s-1}{2})\Gamma(1/2)}{\Gamma(s/2)}  \times
\frac{ \pi^{\frac{s-1}{2}}}{\Gamma(\frac{s-1}{2}+1)} \times
\frac{\Gamma(\frac{s}{2}+1)}{\pi^{s/2}}.
$
Using here identities 
\[
\Gamma(\frac{s}{2}+1) = \frac{s}{2}\Gamma(s/2); \quad
\Gamma(\frac{s-1}{2}+1) = \frac{s-1}{2}\Gamma(\frac{s-1}{2});
\]
We get that $\frac{c_s}{2 \vol{s} \ball{s}} = \frac{s}{2}$.
}
\end{proof}
\vskip 3mm 
\noindent
It remains to find a reasonable definition of a  Santal\'o function for a log-concave function. The natural first approach 
$\lim_{s \to \infty} S_m(f_s,s,t)$ does not lead to anything meaningful.
Lemma \ref{lem:int_s-polar_via_suppfunc}  and Theorem \ref{thm:Alexandrov_s-conc} provide another possible approach.
\par
\noindent
For  $s>0$,  we  define the set 
\begin{eqnarray*}
&&\sthing S_p(f, s, t) =\\
&& \left\{
y \in \slift{f} \st 
\frac{s}{2(d+s)}\int_{S^{d}} 
\frac{\abs{\iprod{e_{d+1}}{ u}}^{s-1}}{
\left(h_{\shift{K}{z}}(u)\right)^{d+s}} \di \spherem{u} \cdot {\int_{\R^d} f  }   \leq t \parenth{ \volbs }^2
\right\}.
\end{eqnarray*}
Lemma \ref{lem:int_s-polar_via_suppfunc} shows  that $\sthing S_p(f, s, t)$ is a convex $d$-symmetric set, and hence, it is the 
$s$-lifting of a $1/s$-concave function which we denote $S_p(f, s, t)$.
The function $S_p(f, s, t)$  seems a  good candidate for a  Santal\'o function and we investigate this in a forthcoming paper.




\vskip 20mm
\noindent
Grigory Ivanov \\
{\small Institute of Science and Technology Austria }\\
{\small Klosterneuburg, 3400, Austria} \\ 
{\small \tt grimivanov@gmail.com}

\vskip 10mm
\noindent
Elisabeth M. Werner\\
{\small Department of Mathematics \hskip 42 mmUniversit\'{e} de Lille 1}\\
{\small Case Western Reserve University \hskip 34mm UFR de Math\'{e}matique }\\
{\small Cleveland, Ohio 44106, U. S. A. \hskip 36mm  59655 Villeneuve d'Ascq, France}\\
{\small \tt elisabeth.werner@case.edu}\\

\end{document}